\newcommand{\ov}{\overline}
\newcommand{\vk}{\varkappa}
\newcommand{\ve}{\varepsilon}
\newcommand{\cA}{{\mathcal A}}
\newcommand{\cM}{{\mathcal M}}
\newcommand{\wt}{\widetilde}
\newcommand{\pt}{\partial}
\newcommand{\1}{1\!\!\,{\rm I}}
\newcommand{\mbR}{{\mathbb R}}
\newcommand{\mbN}{{\mathbb N}}
\theoremstyle{plain}
\newtheorem{thm}{Theorem}[section]
\newtheorem{lem}{Lemma}[section]
\theoremstyle{definition}
\newtheorem{defn}{Definition}[section]
\theoremstyle{remark}
\newcommand{\E}{\mathrm{E}}
\newcommand{\e}{\mathrm{e}}
\newcommand{\Law}{\mathrm{Law}}
\newcommand{\Var}{\mathrm{Var}}
\newcommand{\Prob}{\mathrm{P}}
\theoremstyle{plain}
\newenvironment{customtheorem}[1]
  {\innercustomthm}
  {\endinnercustomthm}
\begin{document}
\title
[On approximations of the point measures associated with the Brownian web\ldots]
{On approximations of the point measures associated with the Brownian web by means of the fractional step method and the discretization of the initial interval}

\author{A.A.Dorogovtsev}
\address{A.A.Dorogovtsev: Institute of Mathematics, National Academy of Sciences of Ukraine, Tereshchenkivska Str. 3, Kiev 01601, Ukraine; National Technical University of Ukraine ``Igor Sikorsky Kyiv Polytechnic Institute'', Institute of Physics and Technology, Peremogi avenue 37, Kiev 01601, Ukraine}
\email{andrey.dorogovtsev@gmail.com}

\author[M.B.Vovchanskii]{M.B.Vovchanskii}
\address{M.B.Vovchanskii: Institute of Mathematics, National Academy of Sciences of  Ukraine, Tereshchenkivska Str. 3, Kiev 01601, Ukraine}
\email{vovchansky.m@gmail.com}

\subjclass[2020]{Primary 60H10, 65C20; Secondary 60K35, 60G57, 60B10}
\keywords{Brownian web, Arratia flow, Fractional Step Method, Splitting, Random Measure, Stochastic Flow, Stochastic Differential Equations}

\begin{abstract}
The rate of the weak convergence in the fractional step method for the Arratia flow is established in terms of the Wasserstein distance between the images of the Lebesque measure under the action of the flow. We introduce finite-dimensional  densities  describing sequences of collisions in the Arratia flow and derive an explicit expression for them. With the initial interval discretized, the convergence of the corresponding approximations of the point measure associated with the Arratia flow is discussed in terms of such densities. 
\end{abstract}

\maketitle
\section{Introduction}
In this article we consider point measures which are constructed from the Arratia flow and its approximations \cite{1,3,9,10}. Two types of discrete measures can be associated with a stochastic flow $\left\{X(u,t)\mid t\ge 0, u\in\mbR\right\}$ with coalescence on the real line: the first measure is the image of the Lebesque measure under the action of the flow 
$$
\mu_t = \lambda \circ \left(X(\cdot, t)\right)^{-1},
$$
and the second one is the counting measure defined by the rule 
$$
\nu_t(\Delta) = \left| X(\mbR, t) \cap \Delta \right|, \quad \Delta\in\mathcal{B}(\mbR).
$$
Both measures are supported on the same locally finite countable set. The structure of such random measures is studied in \cite{15, 16, 22, 24, 25}. In the first part of the article the Arratia flow with drift is considered. This flow consists of coalescing Brownian motions with diffusion $1$ and drift $a,$ where $a$ is a bounded Lipschitz continuous function. Such a stochastic flow was obtained in \cite{1} by applying the fractional step method \cite{11, kotelenez} to the Brownian web \cite{9, 10} and an ordinary differential equation driven by $a.$ Here the study of this approximation scheme is continued by discussing the speed of convergence of the images of the Lebesque measure. 

We start with recalling the fractional step method for the Brownian web proposed in \cite{1}. Let $a$ be a bounded Lipschitz continuous  function on the real line. Consider a sequence of partitions $\{0=t^{(n)}_0<\ldots< t^{(n)}_n=1\}$ of the interval $[0; 1]$ with the mesh size $\delta_n$ converging to $0.$ Define a family of transformations of $\mbR$
$$
\begin{cases}
d\cA_{s,t}(u)=a(\cA_{s,t}(u))dt,\\
\cA_{s,s}(u)=u, \ t\geq s.
\end{cases}
$$
Given a Brownian web $\left\{\Phi_{s,t}(u) \ | \ 0\leq s\leq t, u\in\mbR\right\}$ \cite{9, 10} one can consider  $\{\Phi_{s,t}\}_{0\leq s\leq t}$ as random mappings of $\mbR$ into itself. Put $\Delta^{(n)}_j=[t^{(n)}_j; t^{(n)}_{j+1}), j=\ov{0, n-1},$ and define, for  $u\in\mbR, t\in \Delta^{(n)}_j,$
\begin{align*}
\Phi^{(n)}_t(u)& =\Phi_{t^{(n)}_j, t}\circ\cA_{t^{(n)}_j, t^{(n)}_{j+1} }
\Big(\mathop{\circ}\limits^{j-1}_{l=0}
(\Phi_{t^{(n)}_l, t^{(n)}_{l+1}}\circ \cA_{t^{(n)}_l, t^{(n)}_{l+1}})
(u)
\Big), \\
\Phi^{(n)}_1(u) &=\lim_{t\to1-}\Phi^{(n)}_t(u).
\end{align*}
The sign $\circ$ stands for the composition of functions: $f\circ g=f(g).$ The main result of \cite{1} states that given $u_1,\ldots, u_m\in\mbR$
\begin{equation}
\label{eq:convergence}
(\Phi^{(n)}(u_1), \ldots, \Phi^{(n)}(u_m))\mathop{\Rightarrow}_{n\to\infty}
(\Phi^{a}(u_1), \ldots, \Phi^{a}(u_m))
\end{equation}
in the Skorokhod space $(D([0; 1]))^m,$ with $\{\Phi^a_s(u)|s\geq0, u\in\mbR\}$ being an Arratia flow with drift $a$ \cite[\S 7.3]{3}. It was proven in \cite[Proposition 1.5]{1} that the sequence in the left hand side of \eqref{eq:convergence} converges only weakly in contrast to the application of the fractional step method to ordinary SDEs \cite{11, kotelenez}. 

Let $\lambda$ be the Lebesque measure on $[0; 1].$ One can define images of $\lambda$ under the mappings $\Phi^a_t, \Phi^{(n)}_t:$
$$
\mu_t=\lambda\circ\left(\Phi^a_t\right)^{-1}, \ \mu^{(n)}_t=\lambda\circ\left(\Phi^{(n)}_t\right)^{-1}, \quad n\in\mbN.
$$
Such random measures along with associated point processes are central objects of the present paper, in the first part of which an estimate on the speed of the convergence of the laws of $\{\mu^{(n)}_t\}_{n\geq1}$ to the law of $\mu_t,$ for fixed $t,$ is established in terms of an appropriate Wasserstein distance.

Our approach is based on ideas from \cite{4}. Recall a definition of the Wasserstein distance between two probability measures. Let $X$ be a separable complete metric space with metric $d$ and the corresponding Borel $\sigma-$field. The set $\cM_p(X)$ of all probability measures $\mu$ on $X$ such that for some (and therefore for an arbitrary) point
$u$ $\int_Xd(u, v)^p\mu(dv)<+ \infty$
is a separable metric space \cite[Theorem 6.18]{5} w.r.t. the distance
$$
W_p(\mu_1, \mu_2)=\left(\inf_{\vk\in\Pi(\mu_1, \mu_2)}\int_{X^2}d(u,v)^p\vk(du, dv)\right)^{1/p} \!\!\!\!, \quad p\geq 1,
$$
where $\Pi(\mu_1, \mu_2)$ is the set of all probability measures on $X^2$ having marginals $\mu_1$ and $\mu_2.$

The measures $\mu_t, \mu^{(n)}_t, n\in\mbN,$  are random elements in $\cM_p(\mbR)$ for any $p\geq 1.$ 
Let $L_t$ and $L^{(n)}_t$ be the laws of $\mu_t$ and $\mu^{(n)}_t$ in $\cM_1(\cM_p(\mbR)),$ respectively. For fixed p, the corresponding Wasserstein distance between probability measures $L^\prime, L^{\prime\prime}$ $\in\cM_1(\cM_p(\mbR))$ is defined via
$$
W_{1}(L^\prime, L^{\prime\prime})=\inf\E \ \! W_p(\mu^\prime, \mu^{\prime\prime}),
$$
where the infinum is taken over the set of pairs of $\cM_p(\mbR)-$valued random elements $\mu^\prime, \mu^{\prime\prime}$ satisfying $\Law(\mu^\prime) = L^\prime, \Law(\mu^{\prime\prime})=L^{\prime\prime}.$ To indicate a specific value of $p$ being used, we write $W_{1,p}$ for the distance on $\cM_1(\cM_p(\mbR)).$ The main result of the first section is the following theorem (cf. \cite{vovch.convergence}[Theorem 1], \cite{4}[Theorem 1.3]). 
\begin{customtheorem}{2.1}
\label{thm1}
Assume that the sequence $\left\{n\delta_n \right\}_{n\in\mbN}$ is bounded. Then for every $p\geq 2$ there
exist a  positive constant $C$ and a number $N\in\mbN$ such that for all $n\geq N$
$$
W_{1,p}(L_t, L^{(n)}_t)\leq C(\log \delta^{-1}_n)^{-1/p}.
$$
\end{customtheorem}

The second part of the paper is devoted to the counting measure associated with the Arratia flow. We discuss the speed of convergence of such measures when one approximates the segment of the real line by its finite subsets. For that, we introduce the multidimensional densities which correspond to different sequences of collisions in the $n-$point motion of the Arratia flow. 

Given an Arratia flow $\{X(u,t)\mid t\ge 0, u\in[0;1]\}$ with zero drift put
 $\Delta_n=\{u_1<\ldots<u_n\}, n\in\mbN,$ and $X_t=\{X(u,t)\mid u\in [0;1]\}.$ The next definition is taken from \cite[Appendix B]{15} (see also \cite{16, 22}) and is adjusted to reflect that the Arratia flow now starts from $[0;1]$ instead of the whole real line.
\begin{defn}
The $n$-point density $p^n_t$  is a measurable function  such that  for any bounded nonnegative measurable $f\colon \mbR^n\to\mbR$
 \begin{equation}
 \label{eq2.0}
 \int_{\mbR^n}f(x)p^n_t(x)dx=\E\sum_
{\begin{subarray}{c}
u_1, \ldots, u_n\in X_t,\\
\mbox{\small all distinct}
\end{subarray}}
f(u_1, \ldots, u_n).
 \end{equation}
\end{defn}
Recall that given $u=(u_1, \ldots, u_n)$ the processes $X(u_1),\ldots, X(u_n)$ are coalescing Brownian motions. To describe  all possible sequences of collisions in this system,   the following notation is used. Define $\mathcal{X}^n\in \left(C([0;1])\right)^n$ by setting $\mathcal{X}^n_j(\cdot) = X(u_j, \cdot), j=\ov{1,n}.$ Let $k$ be the number of distinct values in the set $\{X(u_1, t), \ldots, X(u_n, t)\}.$ Supposing $k<n$ let $\tau_1$ be the moment of the first collision on $[0;t].$ Put 
$j_1 = \min\{i\mid \exists j\neq i \ \mathcal{X}^n_i(\tau_1) = \mathcal{X}^n_j(\tau_1)\}.$ Define $\mathcal{X}^{n-1}\in \left(C([0;1])\right)^{n-1}$ by excluding the $j_1-$th coordinate from $\mathcal{X}^n$. If there exists a moment $\tau_2\le t$ such that for some $i,j\in \{1,\ldots, n-1\}$ $\ \mathcal{X}^{n-1}_i(\tau_2)=\mathcal{X}^{n-1}_j(\tau_2)$ put $j_2$ to be equal to the smallest such number. Repeating the procedure $n-k$ times one obtains a random collection $J_t(u) = (j_1,\ldots, j_{n-k}),$ $j_i \in \{1, \ldots, n-i\}, i= \ov{1, n-k}.$ In the case $k =n$ we set $J_t(u) = \emptyset$ by definition. The set of all possible such collections consisting of $l$ numbers is denoted by $\mathcal{J}_{n,l}.$
\begin{defn}
The random collection $J_t(u)$ defined via the recursive procedure described above is called the coalescence scheme corresponding to the start points $u_1, \ldots, u_n.$ 
\end{defn}
\begin{defn}
\label{def:coal.density}
Given $x=(x_1,\dots,x_n)\in\Delta_n$ the $k$-point density $p^{J, k}_t(x;\cdot)$ corresponding to the coalescence scheme 
$J\in\mathcal{J}_{n,n-l}, k\le l,$ and the start points $x_1,\ldots, x_n$ is a measurable function such that  for any bounded nonnegative measurable $f\colon \mbR^k\to\mbR $
 \begin{equation}
 \label{eq2.1}
 \int_{\mbR^k}p^{J, k}_t(x;y)f(y)dy=
\E\sum_{{\begin{subarray}{c}
u_1, \ldots, u_k\in \{X(x_1,t),\ldots, X(x_n,t)\},\\
\mbox{\small all distinct}
\end{subarray}}}
f(u_1, \ldots, u_k)  \times \1\left(J_t(x) = J\right).
 \end{equation}
\end{defn} 
The integral representation is obtained for such densities (Theorem \ref{thm2.1}). The result on convergence of the multidimensional densities  given  in Theorem \ref{thm2.2} is motivated by the discrete approximations of Section 1.

Consider the vectors $U^{(n)}=(u^{(n)}_1, \ldots, u^{(n)}_n) \in \Delta^n,$ such that  $u^{(n)}_1=0, u^{(n)}_n=1, n\in\mbN,$ 
$$
\limsup_{n\to\infty}\max_{j=\ov{0, n-1}}\left(u^{(n)}_{j+1}-u^{(n)}_j\right)=0,
$$
and
$$
\left\{u^{(n)}_1, \ldots, u^{(n)}_n\right\}\subset \left\{u^{(n+1)}_1, \ldots, u^{(n+1)}_{n+1}\right\}, \quad n\in\mbN.
$$
Define
\begin{equation}
\label{eq:def.p.k.t.}
p^k_t(U^{(n)};\cdot)=\sum_{i=k}^{n}\sum_{J\in\mathcal{J}_{n,n-i}}
p^{J, k}_t(U^{(n)};\cdot), \quad k =\ov{1,n}, n\in\mbN.
\end{equation}

\begin{customtheorem}{3.3}
\label{thm2.3}
There exists an absolute positive constant $C$ such that
$$
0\leq {p}^1_t(y)-p^1_t(U^{(n)};y)\leq C \max_{j=\ov{1, n-1}}\left(u^{(n)}_{j+1}-u^{(n)}_j\right)^2
$$
for almost all $y.$
\end{customtheorem}

\section{The Wasserstein distance between $L_t$ and $L^{(n)}_t$}
We approximate the measures $\mu_t$ and $\mu^{(n)}_t$ with point measures 
\begin{align*}
\mu^{(n),m}_t & =m^{-1}\sum^{m-1}_{j=0}\delta_{\Phi^{(n)}_t(j/m)},\\
\mu^{m}_t & =m^{-1}\sum^{m-1}_{j=0}\delta_{\Phi^{a}_t(j/m)}, \quad n,m\in\mbN.
\end{align*}

We begin with $L_p-$ estimates on the divergence between two solutions of a one-dimensional SDE in terms of the difference of the initial points and estimates of the same type for their approximations via the fractional step method.

Let $a$ be a bounded function satisfying the Lipschitz condition with constant $C_a.$  Put $M_a=\sup_\mbR|a|.$ Given a standard Brownian motion $w$ and a point $u\in\mbR$ the equation
\begin{equation*}
\label{eq1}
\begin{split}
dx(t)&=a(x(t))dt+dw(t),\\
x(0)&=u, \quad t\in[0;1],
\end{split}
\end{equation*}
has the unique strong solution $x.$ Consider, for $t\in \Delta^{(n)}_j, j=\ov{0, n-1}: $
\begin{equation}
\label{eq1.1}
\begin{split}
y^{(n)}(t)&=u+\int^{t^{(n)}_{j+1}}_0 a(z^{(n)}(s))ds+w(t),\\
z^{(n)}(t)&=u+\int^{t}_0a(z^{(n)}(s))ds+w(t^{(n)}_j).
\end{split}
\end{equation}
We will encode such a relation between $x, y^{(n)}, z^{(n)}$ and $w, u$ by writing $x=D(w,u),$  $(y^{(n)}, z^{(n)})=S^{(n)}(w,u).$
The next result is a straightforward generalization of  \cite[Corollary 4.2]{11}.
\begin{lem}
\label{lem1}
For any $p\geq1$ there exists $C > 0$ such that
\begin{align*}
\E\sup_{s\leq1}\left|x(s)-y^{(n)}(s)\right|^p &\leq C \delta^{p/2}_n,\\
\sup_{s\leq1} \E\left|x(s)-z^{(n)}(s)\right|^p &\leq C \delta^{p/2}_n.
\end{align*}
\end{lem}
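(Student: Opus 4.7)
\medskip
\noindent\textbf{Proof plan.} My plan is to establish the pointwise $L^p$ estimate for $x-z^{(n)}$ first, via Gronwall, and then to derive the uniform estimate for $x-y^{(n)}$ from it by exploiting an exact cancellation of the Brownian term. The key observation is that, on $\Delta^{(n)}_j,$
$$
x(t)-y^{(n)}(t)=\int_0^t a(x(s))\,ds-\int_0^{t^{(n)}_{j+1}} a(z^{(n)}(s))\,ds,
$$
so the Wiener process $w$ drops out and no martingale modulus of continuity is needed to treat $\sup_{s\leq 1}.$

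\medskip
\noindent\textbf{Step 1 (pointwise estimate for $z^{(n)}$).} For $t\in\Delta^{(n)}_j$ write
$$
x(t)-z^{(n)}(t)=\int_0^t\bigl[a(x(s))-a(z^{(n)}(s))\bigr]ds+\bigl(w(t)-w(t^{(n)}_j)\bigr).
$$
By Jensen and the Lipschitz property of $a,$
$$
\bigl|x(t)-z^{(n)}(t)\bigr|^p\leq 2^{p-1}C_a^{\,p}\int_0^t\bigl|x(s)-z^{(n)}(s)\bigr|^p ds+2^{p-1}\bigl|w(t)-w(t^{(n)}_j)\bigr|^p,
$$
and the Brownian moment gives $\E|w(t)-w(t^{(n)}_j)|^p\leq C_p\delta_n^{p/2}.$ Taking expectations and applying the integral form of Gronwall's inequality yields
$$
\sup_{s\leq 1}\E\bigl|x(s)-z^{(n)}(s)\bigr|^p\leq C\delta_n^{p/2},
$$
which is the second bound. (This is exactly the place where the generalization of the scalar estimate of \cite[Corollary 4.2]{11} to arbitrary $p\geq 1$ is carried out; the argument is identical save for replacing the $p=2$ constants.)

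\medskip
\noindent\textbf{Step 2 (uniform estimate for $y^{(n)}$).} From the cancellation noted above, for $t\in\Delta^{(n)}_j,$
$$
\bigl|x(t)-y^{(n)}(t)\bigr|\leq C_a\int_0^1\bigl|x(s)-z^{(n)}(s)\bigr|ds+M_a\delta_n,
$$
and the right-hand side no longer depends on $t.$ Raising to the $p$-th power, using Jensen on the integral, and taking expectation gives
$$
\E\sup_{s\leq 1}\bigl|x(s)-y^{(n)}(s)\bigr|^p\leq C\int_0^1\E\bigl|x(s)-z^{(n)}(s)\bigr|^p ds+CM_a^{\,p}\delta_n^{p},
$$
and plugging in the estimate from Step 1 yields $C\delta_n^{p/2}$ after absorbing the $\delta_n^{p}$-term for $n$ large.

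\medskip
\noindent\textbf{Main obstacle.} Essentially the only subtlety is recognizing that treating $\sup_{s\leq 1}$ directly for $z^{(n)}$ would introduce a $(\log\delta_n^{-1})^{p/2}$ factor through Levy's modulus of continuity, whereas the formulation of the splitting in \eqref{eq1.1} places the ``full'' Brownian path inside $y^{(n)}$ so that the noise cancels with that of $x;$ reducing the first bound to the second one via this cancellation avoids the spurious logarithm. The remainder of the argument is routine Gronwall analysis.
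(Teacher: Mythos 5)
Your proof is correct. The paper itself gives no argument here --- it simply declares the lemma ``a straightforward generalization of [Corollary 4.2]'' of Bensoussan--Glowinski--Rascanu --- so you are supplying the omitted proof, and you do so along exactly the lines that reference uses: a Gronwall bound for $\E|x(t)-z^{(n)}(t)|^p$ driven by the increment $w(t)-w(t^{(n)}_j)$, followed by the observation that in $x(t)-y^{(n)}(t)$ the noise cancels, which is precisely why the lemma can assert $\E\sup$ for $y^{(n)}$ but only $\sup\E$ for $z^{(n)}$; your ``main obstacle'' remark correctly identifies this asymmetry. Two trivial points: the a priori finiteness of $\sup_{s\le1}\E|x(s)-z^{(n)}(s)|^p$ needed to run Gronwall follows from the boundedness of $a$, and no ``$n$ large'' is needed to absorb the $\delta_n^{p}$ term, since $\delta_n\le 1$ gives $\delta_n^{p}\le\delta_n^{p/2}$ for every $n$.
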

\begin{lem}
\label{lem2}
Suppose $u_1, u_2\in\mbR,$ and $w_1, w_2$ are independent Brownian motions. Let $x_k=D(w_k, u_k), k=1,2.$ Then for any $p\geq 1$ 
there exists $C > 0$ such that
\begin{align*}
\E\sup_{s\leq1}\left|x_1(s\wedge\theta)-x_2(s\wedge\theta)\right|^p & \leq C\Big(|u_1-u_2| + |u_1-u_2|^p\Big), \quad p\ge 2, \\
\E\sup_{s\leq1}|x_1(s\wedge\theta)-x_2(s\wedge\theta)|^p & \leq C\left(|u_1-u_2|^{p/2} + |u_1-u_2|^{p}\right), \quad p\in[1;2),
\end{align*}
where $\theta=\inf\{1;s\mid x_1(s)=x_2(s)\}.$
\end{lem}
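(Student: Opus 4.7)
The plan is to pass to the difference process $Y(s) := x_1(s) - x_2(s)$ and the auxiliary Brownian motion $B(s) := (w_1(s) - w_2(s))/\sqrt{2}$, which is standard by independence of $w_1$ and $w_2$, and then combine a pathwise Gronwall estimate with the Burkholder--Davis--Gundy inequality and a hitting-time bound. Without loss of generality I take $u_1 \ge u_2$, so by continuity $Y(s\wedge\theta) \ge 0$ for all $s$, and
$$Y(s\wedge\theta) = (u_1 - u_2) + \int_0^{s\wedge\theta}\bigl[a(x_1(r)) - a(x_2(r))\bigr]\,dr + \sqrt{2}\,B(s\wedge\theta).$$
Using $|a(x_1) - a(x_2)| \le C_a Y$ on $[0,\theta]$ together with Gronwall's lemma applied pathwise, I obtain
$$\sup_{s\le 1} Y(s\wedge\theta) \le e^{C_a}\Bigl(|u_1 - u_2| + \sqrt{2}\sup_{s\le 1}|B(s\wedge\theta)|\Bigr).$$

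Raising to the $p$-th power and taking expectations, it remains to control $\E\sup_{s\le 1}|B(s\wedge\theta)|^p$. Since $B(\cdot\wedge\theta)$ is a continuous martingale with quadratic variation $s\wedge\theta$, the Burkholder--Davis--Gundy inequality yields
$$\E\sup_{s\le 1}|B(s\wedge\theta)|^p \le C_p\,\E\bigl[(1\wedge\theta)^{p/2}\bigr].$$
For $p \ge 2$ the elementary inequality $(1\wedge\theta)^{p/2} \le 1\wedge\theta$ reduces this to $C_p\,\E[1\wedge\theta]$, while for $p \in [1,2)$ Jensen's inequality gives $\E[(1\wedge\theta)^{p/2}] \le (\E[1\wedge\theta])^{p/2}$. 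Both regimes are thereby reduced to the hitting-time bound $\E[1\wedge\theta] \le C\,|u_1 - u_2|$.

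The main obstacle is precisely this hitting-time estimate. In the driftless reference case $a\equiv 0$, $Y = (u_1-u_2) + \sqrt{2}B$ and the reflection principle furnishes the explicit identity $\Prob(\theta > s) = \Prob(|\xi| \le |u_1-u_2|/\sqrt{2s})$ with $\xi$ standard Gaussian; integrating in $s$ and splitting the range at $s = |u_1-u_2|^2$ yields $\E[1\wedge\theta] = O(|u_1-u_2|)$. To transfer this to the Lipschitz drift setting, I plan to invoke Girsanov's theorem: under a suitable change of measure $Y$ becomes a driftless Brownian motion started at $u_1-u_2$, and the Radon--Nikodym derivative has uniformly bounded moments of all orders because $|a(x_1)-a(x_2)|$ is bounded by $2 M_a$. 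A H\"older-type argument applied after splitting the integration range at the same scale $s = |u_1-u_2|^2$ then transports the linear control on $\E[1\wedge\theta]$ back to the original measure.

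Combining all three ingredients yields $\E\sup_{s\le 1}|Y(s\wedge\theta)|^p \le C\bigl(|u_1-u_2|^p + \E[(1\wedge\theta)^{p/2}]\bigr)$, which gives the bound $C(|u_1-u_2| + |u_1-u_2|^p)$ for $p \ge 2$ and $C(|u_1-u_2|^{p/2} + |u_1-u_2|^p)$ for $p \in [1,2)$, as claimed.
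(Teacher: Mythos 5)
Your overall architecture --- pass to the difference process, control it pathwise by Gronwall, reduce everything via Burkholder--Davis--Gundy to moments of the meeting time $\theta$, and then establish $\E[1\wedge\theta]\le C|u_1-u_2|$ --- is essentially the skeleton of the paper's proof, and everything up to the hitting-time estimate is sound. The gap is in the step you yourself flag as the main obstacle: transferring the linear bound on $\E[1\wedge\theta]$ from the driftless measure $Q$ to the original measure $P$ via Girsanov plus H\"older. With $1/q+1/q'=1$, the best H\"older gives is
$\Prob_P(\theta>s)\le \|dP/dQ\|_{L^q(Q)}\,\Prob_Q(\theta>s)^{1/q'}\le C_q\left(|u_1-u_2|/\sqrt{s}\right)^{1/q'}$,
and integrating over $s\in[|u_1-u_2|^2,1]$ (the splitting you propose) yields only $\E_P[1\wedge\theta]\le C_q\,|u_1-u_2|^{1/q'}$. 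Since $1/q'<1$ for every admissible $q$, this route produces $O(|u_1-u_2|^{1-\ve})$ but never the linear rate; the uniformly bounded moments of the Radon--Nikodym derivative do not repair the exponent. As written, the claimed bound for $p\ge 2$ is therefore not reached.

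The paper avoids the change of measure altogether by a pathwise comparison. Taking $u_2>u_1$, on $[0;\theta]$ one has $a(x_2)-a(x_1)\le C_a(x_2-x_1)$, so $\Delta x=x_2-x_1$ is dominated by the solution $\eta$ of the linear SDE $d\eta=C_a\eta\,dt+d(w_2-w_1)$ with $\eta(0)=u_2-u_1$; the paper checks $\eta-\Delta x\ge 0$ on $[0;\theta]$ directly from the variation-of-constants formula. Consequently $\theta\le\vk:=\inf\{1;s\mid\eta(s)=0\}$, and since $\eta$ is an explicit deterministic term plus a time-changed Brownian motion (Knight's theorem), the tail $\Prob(\vk\ge t)\le C(u_2-u_1)/\sqrt{t}$ is exact and gives $\E\vk^{p/2}\le C(u_2-u_1)$ for $p\ge 2$ with no loss of exponent; the case $p\in[1;2)$ then follows by the Lyapunov (Jensen) inequality, exactly as in your reduction. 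If you replace the Girsanov step by this comparison (or by the standard comparison theorem for one-dimensional SDEs), the rest of your argument goes through and recovers the lemma.
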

\begin{proof}
Denote $\Delta u=u_2-u_1, \Delta x=x_2-x_1.$ Assume $u_2>u_1.$ Consider the SDE
\begin{align*}
d\eta(t)&=C_a\eta(t)dt+dw_2(t)-dw_1(t),\\
\eta(0)&=\Delta u,
\end{align*}
with the unique strong solution 
\begin{equation}
\label{eq2}
\eta(t)=\e^{C_at}\Delta u+\sqrt{2}\e^{C_at}\int^t_0\e^{-C_as}dw(s),
\end{equation}
where $w=\frac{w_2-w_1}{\sqrt{2}}.$ 
We have 
$$
\eta(t)-\Delta x(t)=C_a\int_0^t\left(\eta(s)-\Delta x(s)\right)ds+\int^t_0\left(C_a\Delta x(s)-a(x_2(s))+a(x_1(s))\right)ds,
$$
therefore a.s.
\begin{equation}
\label{eq.l2.1}
\eta(t)-\Delta x(t)=\e^{C_at}\int^t_0\e^{-C_a s}\left(C_a\Delta x(s)-a(x_2(s))+a(x_1(s))\right)ds \ge 0, \quad t\in[0; \theta].
\end{equation} 
Applying the Knight theorem \cite[Prop.18.8]{12} to the stochastic integral in \eqref{eq2}, we get
$$
\eta(t)=\e^{C_at}\Delta u+\sqrt{2}\e^{C_at}\beta\left(\int^t_0\e^{-2C_as}ds\right),
$$
where $\beta$ is some Brownian motion. Then \eqref{eq.l2.1} implies
$$
\theta\leq\vk=\inf\left\{1; s\mid\eta(s)=0\right\}=\inf\left\{1;s\mid\beta\left(\int^t_0\e^{-2C_as}ds\right)=\frac{-\Delta u}{\sqrt{2}}\right\}.
$$
Thus
$$
\E\sup_{t\leq\theta}|\Delta x(t)|^p\leq \E\sup_{t\leq\vk}|\eta(t)|^p\leq 2^{p-1}\e^{pC_a}\Delta u^p+2^{3p/2-1}\e^{pC_a}\E\sup_{t\leq\vk}|\beta(t)|^p,
$$
since $\int^t_0\e^{-2C_as}ds < t, t\ge 0.$ The same reason implies that the random moment $\vk$ is a stopping time w.r.t the filtration generated by $\{\beta(t) \mid t\in [0;1]\},$ therefore, by the Burkholder-Davis-Gundy inequality,  
$$
\E\sup_{t\leq\vk}|\beta(t)|^p\leq C_p\E\vk^{p/2}, \quad p\ge 2,
$$
for positive constants $C_p.$
The distribution of $\vk$ is given via
$$
\Prob\left(\vk\geq t\right)=\sqrt{\frac{2}{\pi}}\int^{a(t)}_0\e^{-y^2 /2}dy, \ a(t)=\frac{C_a^{1/2}(u_2-u_1)}
{\left(1-\e^{-2t}\right)^{1/2}},
$$
hence for fixed $p\ge 2$
\begin{align}
\label{eq3}
\E\vk^{p/2} &=\frac{p}{2}\int^1_0t^{\frac{p}{2}-1}\left(\sqrt{\frac{2}{\pi}}\int^{a(t)}_0\e^{-y^{2}/2} dy\right)dt\leq \frac{p}{\sqrt{2\pi}} \int^1_0a(t)t^{p/2-1}dt\leq
C (u_2-u_1)
\end{align}
for some $C.$ To handle the case $p \in[1;2)$ one uses the Lyapunov inequality and the foregoing estimates.
\end{proof}

We consider a modification of \eqref{eq1.1}: on every $\Delta_j^{(n)}, j=\ov{0,n-1},$
\begin{align*}
y^{(n)}(t) &=u_y+\int^{t^{(n)}_{j+1}}_0a(z^{(n)}(s))ds+w(t), \\
z^{(n)}(t) &=u_z+\int_0^t a(z^{(n)}(s))ds+w(t^{(n)}_j), \quad t\in\Delta_j^{(n)},
\end{align*}
where nonrandom $u_y$ and $u_z$ are not necessarily equal. The pair $(y^{(n)},z^{(n)} )$ is denoted by $S^{(n)}(w, u_y, u_z).$

\begin{lem}
\label{lem3} 
Assume that the sequence $\left\{n\delta_n \right\}_{n\in\mbN}$ is bounded. Let $u_{y_1}, u_{y_2}, u_{z_1}, u_{z_2}\in\mbR,$ and let $w_1, w_2$ be  independent standard Brownian motions. Put $(y^{(n)}_k, z^{(n)}_k)=S^{(n)}(w_k, u_{y_k}, u_{z_k}), k=1,2.$ Then for any $p\geq 2$ and for any $\ve\in(0; \frac{1}{2})$  there exist $C>0$ and  $N\in\mbN$ such that for all $ n\geq N$ 
\begin{align*}
\E \sup_{s\leq 1}\left|y^{(n)}_1(s\wedge\theta^{(n)})-y^{(n)}_2(s\wedge\theta^{(n)})\right|^p &\leq C \left(\delta^{1/2-\ve}_n +  \sum_{l=1}^2 \left(|u_{z_l}-u_{y_l}| + |u_{z_l}-u_{y_l}|^p \right) + \right. \\ 
& \phantom{aaaaaa} + |u_{y_2}-u_{y_1}|^p+|u_{y_2}-u_{y_1}|\Bigg),
\end{align*}
where  $\theta^{(n)}=\inf\{1; s \mid y^{(n)}_2(s)=y^{(n)}_1(s)\}.$
\end{lem}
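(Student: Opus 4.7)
The strategy is to interpose the genuine SDE solutions $x_k=D(w_k,u_{y_k})$, $k=1,2$, and exploit the decomposition
\[
y^{(n)}_1(\sigma)-y^{(n)}_2(\sigma)=\bigl(y^{(n)}_1(\sigma)-x_1(\sigma)\bigr)-\bigl(y^{(n)}_2(\sigma)-x_2(\sigma)\bigr)+\bigl(x_1(\sigma)-x_2(\sigma)\bigr)
\]
evaluated at $\sigma=s\wedge\theta^{(n)}$. Taking the $p$-th power, applying $(a+b+c)^p\le 3^{p-1}(a^p+b^p+c^p)$, and passing to the expectation reduces the lemma to controlling $\E\sup_{s\le 1}|x_1(s\wedge\theta^{(n)})-x_2(s\wedge\theta^{(n)})|^p$ and $\E\sup_{s\le 1}|y^{(n)}_k(s)-x_k(s)|^p$ for $k=1,2$.

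\textbf{Middle term via coalescence.} Let $\theta=\inf\{1;s\mid x_1(s)=x_2(s)\}$. Since the diffusions $x_1,x_2$ agree past $\theta$, for every $\sigma\in[0,1]$ one has $|x_1(\sigma)-x_2(\sigma)|=|x_1(\sigma\wedge\theta)-x_2(\sigma\wedge\theta)|$; applying this with $\sigma=s\wedge\theta^{(n)}$ and taking the supremum gives
\[
\sup_{s\le 1}\bigl|x_1(s\wedge\theta^{(n)})-x_2(s\wedge\theta^{(n)})\bigr|\le \sup_{s\le 1}\bigl|x_1(s\wedge\theta)-x_2(s\wedge\theta)\bigr|.
\]
Lemma \ref{lem2} applied to $x_1,x_2$ then supplies the $|u_{y_2}-u_{y_1}|+|u_{y_2}-u_{y_1}|^p$ contribution to the final bound.

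\textbf{Boundary terms via Gronwall.} Both $x_k$ and $y^{(n)}_k$ start from $u_{y_k}$, so on $\Delta^{(n)}_j$
\[
y^{(n)}_k(t)-x_k(t)=\int_0^t\bigl(a(z^{(n)}_k(s))-a(x_k(s))\bigr)\,ds+\int_t^{t^{(n)}_{j+1}}a(z^{(n)}_k(s))\,ds,
\]
while
\[
z^{(n)}_k(t)-x_k(t)=(u_{z_k}-u_{y_k})+\int_0^t\bigl(a(z^{(n)}_k(s))-a(x_k(s))\bigr)\,ds+\bigl(w_k(t^{(n)}_j)-w_k(t)\bigr).
\]
Setting $\omega_k(\delta_n):=\sup_{|t-s|\le\delta_n}|w_k(t)-w_k(s)|$ and applying Gronwall to the second identity yields $\sup_{t\le 1}|z^{(n)}_k(t)-x_k(t)|\le C(|u_{z_k}-u_{y_k}|+\omega_k(\delta_n))$; substituting into the first identity produces the same type of bound for $|y^{(n)}_k(t)-x_k(t)|$ with an additional $M_a\delta_n$. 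Using the standard $L^p$ modulus estimate $\E\,\omega_k(\delta_n)^p\le C(\delta_n|\log\delta_n|)^{p/2}\le C\delta_n^{1/2-\ve}$ (valid because $p\ge 2$ and $\delta_n<1$) and the inequality $|u|^p\le |u|+|u|^p$ then yields the required bound on $\E\sup_s|y^{(n)}_k(s)-x_k(s)|^p$.

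\textbf{Main obstacle.} The principal technical difficulty is that in the setup of the lemma the input $u_{z_k}$ of the drift step is allowed to differ from the starting point $u_{y_k}$ of the diffusion step, a case not covered by Lemma \ref{lem1}. The discrepancy $|u_{z_k}-u_{y_k}|$ must be propagated additively through the Gronwall iteration on $z^{(n)}_k-x_k$ and then transferred into the bound on $y^{(n)}_k-x_k$; this is the origin of the extra terms in the statement. The hypothesis that $\{n\delta_n\}$ is bounded is invoked to pin down $\log n\asymp|\log\delta_n|$ in the $L^p$ estimate of the partition modulus of $w_k$, so that the rate $\delta_n^{1/2-\ve}$ indeed applies uniformly for large $n$.
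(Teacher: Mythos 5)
Your decomposition founders on the middle term. You write that ``the diffusions $x_1,x_2$ agree past $\theta$,'' but $x_1=D(w_1,u_{y_1})$ and $x_2=D(w_2,u_{y_2})$ are solutions driven by \emph{independent} Brownian motions: they do not coalesce, and almost surely they separate immediately after their first meeting time $\theta$. Consequently the claimed identity $|x_1(\sigma)-x_2(\sigma)|=|x_1(\sigma\wedge\theta)-x_2(\sigma\wedge\theta)|$ is false for $\sigma>\theta$, and the inequality
$$
\sup_{s\leq 1}\left|x_1(s\wedge\theta^{(n)})-x_2(s\wedge\theta^{(n)})\right|\leq \sup_{s\leq 1}\left|x_1(s\wedge\theta)-x_2(s\wedge\theta)\right|
$$
fails whenever $\theta^{(n)}>\theta$: on $(\theta,\theta^{(n)}]$ the difference $x_1-x_2$ restarts from $0$ and fluctuates on the scale $(\theta^{(n)}-\theta)^{1/2}$, which is not controlled by $|u_{y_2}-u_{y_1}|$. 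Lemma \ref{lem2} only bounds the pair stopped at \emph{its own} meeting time $\theta$, and that is precisely why it is stated that way.

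The gap is not cosmetic: the entire difficulty of the lemma is to show that $\theta^{(n)}$, the meeting time of the \emph{approximating} processes, is small in the relevant moment sense, so that the stopped difference cannot wander far. The paper's proof does this by comparing $\Delta y=y^{(n)}_2-y^{(n)}_1$ with the explicit linear diffusion $\eta$ of \eqref{eq2}, propagating the discretization errors (the Riemann-sum Gaussian terms $\xi_j$, the $\delta_n M_a$ terms, and the offsets $|u_{z_l}-u_{y_l}|$) through Gronwall to get $\Delta y\leq\eta+\mathrm{err}$ on $[0;\theta^{(n)}]$, then reducing $\E\sup_{t\le\theta^{(n)}}|\eta|^p$ to $\E(\theta^{(n)})^{p/2}$ via Knight's theorem and Burkholder--Davis--Gundy, and finally bounding $\E(\theta^{(n)})^{p/2}$ by the hitting time of a level shifted by $-\mathrm{err}$ together with the L\'evy-inequality tail bound \eqref{eq:levy.2} and the choice $x_n=\delta_n^{1/2-\ve}$. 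None of this machinery appears in your proposal, and your ``boundary terms'' (which are essentially a redo of Lemma \ref{lem1} with unequal initial points and are fine as far as they go) cannot substitute for it. Incidentally, the hypothesis that $\{n\delta_n\}$ is bounded is used in the paper to control $\Var(\xi_n)\leq 2n\delta_n^2$, not to compare $\log n$ with $|\log\delta_n|$.
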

\begin{proof}
We extend the proof of Lemma \ref{lem2}. Suppose  $u_{z_2}-u_{z_1}\geq 0, $ $u_{y_2}-u_{y_1}\geq 0.$ Denote $\Delta u_y=u_{y_2}-u_{y_1}, \Delta u_z=u_{z_2}-u_{z_1},$ and let $\eta$ be defined as in \eqref{eq2} with $\Delta u=\Delta u_y.$ Then for $t\leq \theta^{(n)}, t\in \Delta^{(n)}_j$ for some $j,$ and for $\Delta y=y_2-y_1$
\begin{align*}
\Delta y(t)-\eta(t)&=C_a\int^t_0(\Delta y(s)-\eta(s))ds+
\int^t_0\left(a(z^{(n)}_2(s))-a(z^{(n)}_1(s))-C_a\Delta y(s)\right)ds+ \\
& \phantom{aaaaaa} +\int^{t^{(n)}_{j+1}}_t(a(z^{(n)}_2(s))-a(z^{(n)}_1(s)))ds\leq  \\ 
& \leq C_a\int^t_0(\Delta y(s)-\eta(s))ds+ C_a\int^t_0\sum^2_{l=1} (-1)^{l}
(z^{(n)}_l(s)-y^{(n)}_l(s))ds+2\delta_n M_a, 
\end{align*}
since $z^{(n)}_2\geq z^{(n)}_1$ on $[0; \theta^{(n)}].$ For $s\in \Delta^{(n)}_i, i\leq j,$
\begin{align*}
\left| z^{(n)}_k(s)-y^{(n)}_k(s) - w_k(t^{(n)}_i)+w_k(s)\right|& \leq
\int^{t^{(n)}_{i+1}}_s|a(z^{(n)}_k(s)|ds  + |u_{z_k}-u_{y_k}|\leq \\ 
&\leq (t^{(n)}_{i+1}-s)M_a+|u_{z_k}-u_{y_k}|,
\end{align*}
so it follows, for $t\in \Delta^{(n)}_j, t\leq\theta^{(n)},$ that
\begin{align*}
\Delta y(t)-\eta(t) & \leq C_a\int^t_0(\Delta y(s)-\eta(s))ds + C_a\sum^{j-1}_{k=0}\int_{\Delta^{(n)}_k}\sum^2_{l=1}(-1)^{l}(w_l(t^{(n)}_{k+1})-w_l(s))ds  \\ 
& \phantom{aaaaaa} +2C_aM_a\sum^{j-1}_{k=0}\int_{\Delta^{(n)}_k}(t^{(n)}_{k+1}-s)ds+2\delta_nM_a+\sum_{l=1}^2 |u_{z_l}-u_{y_l}|.
\end{align*}
Since
$$
2C_aM_a\sum^{j-1}_{k=0}\int_{\Delta^{(n)}_k}(t^{(n)}_{k+1}-s)ds\leq C_aM_a\delta_n,
$$
the Gronwall--Bellman inequality implies 
\begin{align*}
\Delta  y(t)\leq \eta(t)+\e^{C_a}M_a(C_a+2)\delta_n+\e^{C_a}\sum_{l=1}^2 |u_{z_l}-u_{y_l}|+ \e^{C_a}C_a \max_{j=\ov{1,n}}\left|\xi_j\right|, 
\end{align*}
where 
$$
\xi_j=\sum^2_{l=1}\sum^{j-1}_{k=0}\int_{\Delta^{(n)}_k}(-1)^l(w_l(t^{(n)}_{k+1})-w_l(s))ds, j=\ov{1,n}.
$$
Thus 
\begin{align}
\label{eq4}
\E\sup_{s\leq\theta^{(n)}}\left|\Delta y(s)\right|^p& \leq 4^{p-1}\left( \E\sup_{s\leq\theta^{(n)}}\left|\eta(s)\right|^p + \e^{pC_a}M^p_a(C_a+2)^p\delta^p_n +\e^{pC_a}\left(\sum_{l=1}^2 |u_{z_l}-u_{y_l}|\right)^p + \right. \nonumber \\ 
&\phantom{aaaaaaa} + \left. \e^{pC_a}C_a^p\ \E\max_{j=\ov{1,n}}\left|\xi_j\right|^p \right).
\end{align}
The random variables $\xi_{j+1} -\xi_j, j =\ov{1,n-1},$ are independent centered Gaussian variables; $\Var(\xi_n) \leq 2n\delta_n^2.$ Therefore, by the Levy inequality, there exists a constant $C$ such that 
\begin{equation}
\label{eq:levy.1}
\E\max_{j=\ov{1,n}}\left|\xi_j\right|^p \leq 2 \E |\xi_n|^p \leq C n^p \delta_n^{2p}
\end{equation}
and, for any $x_n > 0,$
\begin{align}
\label{eq:levy.2}
\Prob\left(\max_{k=\ov{1,n}}|\xi_k|\geq x_n\right) \leq 2\Prob\left( \left|\mathbb{N}(0,1)\right| \geq \frac{x_n}{\left(\Var(\xi_n)\right)^{1/2}} \right) \leq C \frac{n^{1/2}\delta_n}{x_n} \e^{-\frac{x_n^2}{4n\delta_n^2}}.
\end{align}

At the same time, proceeding exactly as in the proof of Lemma \ref{lem2} we obtain
\begin{equation}
\label{eq6}
\E\sup_{t\leq\theta^{(n)}}|\eta(t)|^p\leq 2^{p-1}\e^{pC_a}\Delta u_y^p+2^{3p/2-1}\e^{pC_a}C_p \E(\theta^{(n)})^{p/2}.
\end{equation}
However, at time $\theta^{(n)}$
\begin{align*}
\eta(\theta^{(n)}) &\geq -\e^{C_a}M_a(C_a+2)\delta_n-\e^{C_a}\sum_{l=1}^2 |u_{z_l}-u_{y_l}|-\e^{C_a}C_a\max_{j=\ov{1,n}}|\xi(j)|,
\end{align*}
so for fixed $x_n>0$
\begin{equation*}
\E(\theta^{(n)})^{p/2}\leq \Prob\left(\max_{k=\ov{1,n}}|\xi_k|\geq x_n\right)+\E\tau^{p/2}_n,
\end{equation*}
where 
$$
\tau_n=\inf\left\{1; s\mid\eta(s)=-\e^{C_a}M_a(C_a+2)\delta_n-\e^{C_a}\sum_{l=1}^2 |u_{z_l}-u_{y_l}|-\e^{C_a}C_a x_n\right\}.
$$
Put $K = \sup_{k\in\mbN} k \delta_k.$ Reasoning leading to \eqref{eq3}, when combined with \eqref{eq:levy.2}, implies that 
\begin{align}
\label{eq8}
\E(\theta^{(n)})^{p/2} & \leq C \Big(  \Delta u_y + \delta_n + \sum_{l=1}^2 |u_{z_l}-u_{y_l}| + x_n + K^{1/2}\frac{\delta_n^{1/2}}{x_n}  \e^{-\frac{x_n^2}{4K\delta_n}} \Big),
\end{align}
for the redefined  constant $C.$ Choosing $x_n=\delta_n^{1/2-\ve},$ for any fixed $\ve\in(0;\frac{1}{2}),$ and substituting \eqref{eq:levy.1}, \eqref{eq6} and \eqref{eq8} into \eqref{eq4} finishes the proof. 
\end{proof}

Let us recall the definitions of the measures considered. For the random elements in $\cM_p(\mbR)$
\begin{align*}
\mu_t & =\lambda\circ\left(\Phi^a_t\right)^{-1}, 
\ \mu^m_t=\left(\frac{1}{m}\sum^m_{j=1}\delta_{j/m}\right)\circ\left(\Phi^a_t\right)^{-1}, \\
\mu^{(n)}_t &=\lambda\circ\left(\Phi^{(n)}_t\right)^{-1},
\ \mu^{(n),m}_t=\left(\frac{1}{m}\sum^m_{j=1}\delta_{j/m}\right)\circ\left(\Phi^{(n)}_t\right)^{-1}, \quad n, m\in\mbN,
\end{align*}
we consider their distributions as elements of $\cM_1(\cM_p(\mbR)):$
\begin{align*}
L_t &=\Law(\mu_t), \ L^m_t=\Law(\mu^m_t), \\
L^{(n)}_t&=\Law\left(\mu^{(n)}_t\right), \ L^{(n),m}_t=\Law\left(\mu^{(n),m}_t\right), \quad n,m\in\mbN.
\end{align*}

Analogously to \cite[Theorem 2.1]{4}, we have
\begin{lem}
\label{lem4}
For any $p\geq 2$ there exists $C>0$ such that 
$$
W_{1,p}(L_t, L^m_t)\leq Cm^{-1/p},
$$
and, if additionally $\left\{n \delta_n \right\}_{n\in\mbN}$ is bounded,
$$
W_{1,p}(L^{(n)}_t, L^{(n),m}_t)\leq C\left(m^{-1}+\delta^{1/2-\ve}_n\right)^{1/p}.
$$
\end{lem}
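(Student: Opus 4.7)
The plan is to use the natural coupling induced by the same realization of the underlying flow and then to reduce the resulting Wasserstein cost to a two-particle estimate from Lemma \ref{lem2} (respectively Lemma \ref{lem3}).

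For the first bound, I realize $\mu_t$ and $\mu^m_t$ on a single probability space via one Arratia flow $\Phi^a$ and fix a measurable rounding $\pi_m\colon [0;1]\to\{1/m,2/m,\ldots,1\}$ with $|u-\pi_m(u)|\leq 1/m$ and $\lambda\circ\pi_m^{-1}=m^{-1}\sum^m_{j=1}\delta_{j/m}$ (for instance $\pi_m(u)=\lceil um\rceil/m$). The pushforward of $\lambda$ under $u\mapsto (\Phi^a_t(u),\Phi^a_t(\pi_m(u)))$ is then a coupling of $\mu_t$ and $\mu^m_t,$ so by the definition of $W_p$
$$
W_p^p(\mu_t,\mu^m_t)\leq \int_0^1|\Phi^a_t(u)-\Phi^a_t(\pi_m(u))|^p\,du.
$$
For each $u,$ the joint law of $(\Phi^a_\cdot(u),\Phi^a_\cdot(\pi_m(u)))$ is that of a pair of coalescing diffusions with drift $a$ driven by independent Brownian motions up to their coalescence time $\theta,$ which is precisely the setting of Lemma \ref{lem2}. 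Combined with $|u-\pi_m(u)|\leq 1/m\leq 1$ and $p\geq 2,$ the lemma yields $\E|\Phi^a_t(u)-\Phi^a_t(\pi_m(u))|^p\leq C/m.$ Integrating over $[0;1]$ via Fubini and applying Jensen's inequality produce $\E W_p(\mu_t,\mu^m_t)\leq (\E W_p^p(\mu_t,\mu^m_t))^{1/p}\leq Cm^{-1/p},$ which bounds $W_{1,p}(L_t,L^m_t)$ as claimed.

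The second bound follows from the same argument with $\Phi^a$ replaced by the fractional step flow $\Phi^{(n)}$ and Lemma \ref{lem2} replaced by Lemma \ref{lem3}. Two particles of $\Phi^{(n)}$ evolve, before their coalescence time $\theta^{(n)},$ according to the scheme $(y^{(n)}_k)_{k=1,2}$ of Lemma \ref{lem3} with independent driving Brownian motions and the degenerate choice $u_{y_k}=u_{z_k},$ in which regime the contribution $\sum_l(|u_{z_l}-u_{y_l}|+|u_{z_l}-u_{y_l}|^p)$ vanishes. For $|u-v|\leq 1/m$ and $n$ sufficiently large the lemma then delivers $\E|\Phi^{(n)}_t(u)-\Phi^{(n)}_t(v)|^p\leq C(\delta^{1/2-\ve}_n+m^{-1}).$ Integrating as before, with a rounding $\pi_m$ adapted to the indexing $j=\ov{0,m-1}$ used in $\mu^{(n),m}_t,$ yields $\E W_p(\mu^{(n)}_t,\mu^{(n),m}_t)\leq C(m^{-1}+\delta^{1/2-\ve}_n)^{1/p}.$

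The only genuinely subtle point is the identification of the joint law of two picked particles of the coalescing flow $\Phi^a$ (respectively of the scheme $\Phi^{(n)}$) with the two-particle systems treated in Lemma \ref{lem2} (respectively Lemma \ref{lem3}), including the agreement of their coalescence times; once this identification is in place, the estimates follow by a direct transport computation combined with Jensen's inequality, with no further input from the structure of the flow.
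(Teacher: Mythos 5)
Your proposal is correct and follows essentially the same route as the paper: both bound $W_p(\mu_t,\mu^m_t)^p$ by the transport cost of the rounding coupling $u\mapsto(\Phi^a_t(u),\Phi^a_t(\pi_m(u)))$, identify the two-point motion before coalescence with the pair of independent-noise diffusions of Lemma \ref{lem2} (resp. the scheme of Lemma \ref{lem3} with $u_{y_k}=u_{z_k}$), and finish with Jensen's inequality. The only cosmetic difference is that the paper invokes Villani's Theorem 2.18 for the coupling bound rather than writing the pushforward coupling explicitly.
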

\begin{proof}
Since the random measures $(\mu_t, \mu^m_t)$ is a coupling for the pair $(L_t, L^m_t),$ it follows from the definition of the distance $W_{1,p}$ that 
$$
W_{1,p}(L_t, L^m_t)\leq \E \ \! W_p(\mu_t, \mu^m_t) ,
$$
therefore, by \cite[Theorem 2.18, Remark 2.19]{5} and Lemma \ref{lem2}, for some $C,$
\begin{align*}
W_{1,p}(L_t, L^m_t)& \leq \left(\sum^{m-1}_{j=0}\int^{(j+1)/m}_{j/m}
\E\left|\Phi^a_t(y)-\Phi^a_t(j/m)\right|^pdy
\right)^{1/p}\leq \\
& \leq 2^{1/p} C \left(\sum^{m-1}_{j=0}\int^{(j+1)/m}_{j/m}(y-j/m)dy\right)^{1/p}= Cm^{-1/p}
\end{align*}
as, for $x_1, x_2$ from Lemma \ref{lem2}, 
$$
\left\{\left(\Phi^a_{t\wedge\theta_1}(y), \Phi^a_{t\wedge\theta_1}\left(j/m\right)\right)\mid t\in[0; 1]\right\} \overset{d}{=} \left\{\left(x_1(t\wedge\theta_2), x_2(t\wedge\theta_2\right)\mid t\in[0;1]\right\},$$
$\theta_1, \theta_2$ being he moments of meeting for the corresponding pairs of processes. Similarly, using Lemma \ref{lem3} with $u_{z_k}=u_{y_k}, k=1,2,$
$$
W_{1,p}\left(L^{(n)}_t, L^{(n),m}_t\right) \leq
C \left(\sum^{m-1}_{j=0}\int^{(j+1)/m}_{j/m}\left(y-j/m\right)dy+\delta_n^{1/2-\ve}\right)^{1/p} \leq
C \left(m^{-1}+\delta^{1/2-\ve}_n\right)^{1/p},
$$
for some $C.$
\end{proof}

Now we describe  appropriate couplings for $(\mu^m_t, \mu^{(n), m}), n\in\mbN,$ given fixed $m.$ Suppose $w_1, \ldots, w_m$ are independent standard Brownian motions. Denoting $u_j=j/m, j=\ov{0,m},$ put
\begin{align*}
x_j&=D(w_j, u_j), \\
(y^{(n)}_j, z^{(n)}_j)&=S^{(n)}(w_j, u_j), \quad n\in\mbN,
\end{align*}
and define $\wt{x}_1=x_1, \wt{y}^{(n)}_1=y^{(n)}_1, \wt{z}^{(n)}_1=z^{(n)}_1.$ Proceeding recursively, put
\begin{align*}
\theta_j&=\inf\{1; s\mid x_j(s)=\wt{x}_{j-1}(s)\}, \\
\theta^{(n)}_j&=\inf\{1; s\mid y_j^{(n)}(s)=\wt{y}^{(n)}_{j-1}(s)\},\\
\wt{x}_j(t)&=x_j(t)\1\left(t<\theta_j\right)+\wt{x}_{j-1}(t)\1\left(t\geq\theta_j\right), \\
\wt{y}^{(n)}_j(t)&=y^{(n)}_j(t)\1\left(t<\theta^{(n)}_j\right)+\wt{y}^{(n)}_{j-1}(t)\1\left(t\geq\theta^{(n)}_j\right), \quad j=\ov{2, m}.
\end{align*}
Consider a random  number $k^{(n)}_j$ such that $\theta^{(n)}_j\in \Delta^{(n)}_j$ and put
\begin{align*}
\wt{z}^{(n)}_j(t) &=z^{(n)}_j(t)\1\left(t<t^{(n)}_{k^{(n)}_j+1}\right)+\wt{z}^{(n)}_{j-1}(t)\1\left
(t\geq t^{(n)}_{k^{(n)}_j+1}\right), \quad t\in[0;1),\ j=\ov{2,m}.
\end{align*}
Values at $t=1$ are taken to be equal to the corresponding left limits. The processes 
\begin{align*}
\wt{w}_1&=w_1, \wt{w}^{(n)}_1=w_1, \\
\wt{w}_j(t) & =w_j(t)\1\left(t<\theta_j\right)+\wt{w}_{j-1}(t)\1\left(t\geq\theta_j\right), \\
\wt{w}^{(n)}_j(t)&=w_j(t)\1\left(t<\theta^{(n)}_j\right)+\wt{w}^{(n)}_{j-1}(t)\1\left(t\geq\theta^{(n)}_j\right), \quad j=\ov{2,m}, \ n\in\mbN,
\end{align*}
can be checked to be Brownian motions.

The proofs of the next two lemmas are based on the repeated application of \eqref{eq1.1} and are thus omitted.
\begin{lem}
\label{lem5}
For $n, m\in\mbN$  and $j=\ov{1,m},$
\begin{align*}
\wt{x}_j &=D(\wt{w}_j, u_j), \\
\left(\wt{y}^{(n)}_j, \wt{z}^{(n)}_j\right) &=S^{(n)}(\wt{w}^{(n)}_j, u_j).
\end{align*}
\end{lem}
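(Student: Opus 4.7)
The plan is to prove both identities by induction on $j$, relying on uniqueness of the strong solution of the SDE defining $D(\cdot, \cdot)$ and of the coupled integral--ODE system defining $S^{(n)}(\cdot, \cdot)$. The base case $j=1$ is immediate from $\wt{x}_1=x_1$, $(\wt{y}^{(n)}_1, \wt{z}^{(n)}_1)=(y^{(n)}_1, z^{(n)}_1)$ and $\wt{w}_1=\wt{w}^{(n)}_1=w_1$. For the inductive step, I would split each trajectory into the pieces before and after the relevant collision time, verify the target integral equation on each piece with the patched Brownian motion, and use the matching identities $x_j(\theta_j)=\wt{x}_{j-1}(\theta_j)$ and $y^{(n)}_j(\theta^{(n)}_j)=\wt{y}^{(n)}_{j-1}(\theta^{(n)}_j)$ to ensure consistency of the integral equation at the switching moment.

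For $\wt{x}_j=D(\wt{w}_j, u_j)$ the argument is direct. On $\{t<\theta_j\}$ one has $\wt{x}_j=x_j$ and $\wt{w}_j=w_j$, so the desired equation $\wt{x}_j(t)=u_j+\int_0^t a(\wt{x}_j(s))ds+\wt{w}_j(t)$ is the defining equation of $x_j$. On $\{t\geq\theta_j\}$ the inductive hypothesis supplies the integral representation of $\wt{x}_{j-1}=D(\wt{w}_{j-1}, u_{j-1})$; writing the increment of $\wt{x}_j=\wt{x}_{j-1}$ from $\theta_j$ to $t$, exploiting that $\wt{w}_j$ and $\wt{w}_{j-1}$ have identical increments past $\theta_j$, and adding back the value $\wt{x}_j(\theta_j)=x_j(\theta_j)$ read off from the left-hand piece closes the identity.

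For the pair $(\wt{y}^{(n)}_j, \wt{z}^{(n)}_j)=S^{(n)}(\wt{w}^{(n)}_j, u_j)$ the argument is structurally analogous but proceeds partition interval by partition interval. On $\Delta^{(n)}_l$ with $l<k^{(n)}_j$ both components and $\wt{w}^{(n)}_j$ coincide with $(y^{(n)}_j, z^{(n)}_j)$ and $w_j$, so the relations in \eqref{eq1.1} hold by the definition of $S^{(n)}(w_j, u_j)$; on intervals with $l>k^{(n)}_j$ they coincide with the previously switched $(\wt{y}^{(n)}_{j-1}, \wt{z}^{(n)}_{j-1})$, and the inductive hypothesis applies after using additivity of the integrals over $[0, t^{(n)}_{l+1}]$. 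I expect the main obstacle to lie on the transition interval $\Delta^{(n)}_{k^{(n)}_j}$, where the two components are switched at different moments: $\wt{y}^{(n)}_j$ flips to $\wt{y}^{(n)}_{j-1}$ at the interior random time $\theta^{(n)}_j$, whereas $\wt{z}^{(n)}_j$ flips only at the right endpoint $t^{(n)}_{k^{(n)}_j+1}$. One has to verify that the ODE driving $\wt{z}^{(n)}_j$ on this interval, based on the frozen value $\wt{w}^{(n)}_j(t^{(n)}_{k^{(n)}_j})=w_j(t^{(n)}_{k^{(n)}_j})$, remains consistent with the first line of \eqref{eq1.1} evaluated at $\theta^{(n)}_j$ for $\wt{y}^{(n)}_j$, and that the collision identity for $y^{(n)}$ at $\theta^{(n)}_j$ provides exactly the balance needed to reconcile the two switching rules.
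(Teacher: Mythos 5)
Your proposal is correct and follows exactly the route the paper intends (the authors omit the proof, noting only that it is a "repeated application of \eqref{eq1.1}"): induction on $j$, interval-by-interval verification of the defining integral equations with the patched noise, and the observation that the collision identity $y^{(n)}_j(\theta^{(n)}_j)=\wt{y}^{(n)}_{j-1}(\theta^{(n)}_j)$ is precisely what reconciles the asynchronous switching of $\wt{y}^{(n)}_j$ at $\theta^{(n)}_j$ and of $\wt{z}^{(n)}_j$ at $t^{(n)}_{k^{(n)}_j+1}$. You also correctly read $\wt{w}_j$ and $\wt{w}^{(n)}_j$ as inheriting the \emph{increments} of $\wt{w}_{j-1}$, resp. $\wt{w}^{(n)}_{j-1}$, after the switching time, which is the interpretation under which the lemma holds.
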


\begin{lem}
\label{lem6}
For $n, m\in \mbN$ 
\begin{align*}
\left(\Phi^a(u_1), \ldots, \Phi^a(u_m)\right)&\overset{d}{=}(\wt{x}_1, \ldots, \wt{x}_m), \\
\left(\Phi^{(n)}_{0, \cdot}(u_1), \ldots, \Phi^{(n)}_{0,\cdot}(u_m)\right)&\overset{d}{=}(\wt{y}^{(n)}_1, \ldots, \wt{y}^{(n)}_m),
\end{align*}
in $(D([0; 1]))^m.$
\end{lem}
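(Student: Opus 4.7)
The plan is to establish both equalities in distribution by induction on $m,$ matching in each step the conditional law of the newly added trajectory with the corresponding conditional law in the original $m$-point motion. For $m=1$ the statement is immediate, since by Lemma \ref{lem5} one has $\wt{x}_1=D(w_1,u_1)$ and $(\wt{y}^{(n)}_1,\wt{z}^{(n)}_1)=S^{(n)}(w_1,u_1),$ which coincide in law with $\Phi^a(u_1)$ and $\Phi^{(n)}_{0,\cdot}(u_1)$ by construction.

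For the inductive step on the Arratia flow side, I would invoke the following characterization of the $m$-point motion of $\Phi^a$: the trajectories $\Phi^a(u_1),\ldots,\Phi^a(u_m)$ remain a.s. ordered, and conditionally on $(\Phi^a(u_1),\ldots,\Phi^a(u_{m-1}))$ the process $\Phi^a(u_m)$ is a diffusion with drift $a$ started at $u_m,$ driven by a Brownian motion independent of the conditioning up to the first meeting time with $\Phi^a(u_{m-1}),$ and coinciding with $\Phi^a(u_{m-1})$ afterwards. The candidate $\wt{x}_m$ has exactly this conditional structure: $x_m=D(w_m,u_m)$ with $w_m$ independent of $(w_1,\ldots,w_{m-1})$ and hence of $(\wt{x}_1,\ldots,\wt{x}_{m-1}),$ the random time $\theta_m$ is the first meeting of $x_m$ with $\wt{x}_{m-1},$ and the gluing $\wt{x}_m(t)=x_m(t)\1(t<\theta_m)+\wt{x}_{m-1}(t)\1(t\geq\theta_m)$ encodes coalescence. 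Combined with the inductive hypothesis this yields the equality in distribution in $(D([0;1]))^m.$

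For the second identity I would argue analogously, using that the $m$-point motion of $\Phi^{(n)}$ is obtained by alternating, on each subinterval $\Delta^{(n)}_k,$ a Brownian web step (which on any finite configuration is coalescing Brownian motions driven by independent Brownian motions before coalescence) with the deterministic ODE step $\cA.$ The construction of $\wt{y}^{(n)}_m$ and $\wt{z}^{(n)}_m$ through Lemma \ref{lem5}, together with the random index $k^{(n)}_m$ --- whose role is to guarantee that once $y^{(n)}_m$ merges with $\wt{y}^{(n)}_{m-1}$ inside some $\Delta^{(n)}_{j},$ the ODE step subsequently applied to $\wt{z}^{(n)}_m$ coincides with the one applied to $\wt{z}^{(n)}_{m-1}$ --- matches this alternation, and the induction closes in the same way.

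The main obstacle is twofold. First, one needs to verify that each $\wt{w}_j$ and $\wt{w}^{(n)}_j$ is a standard Brownian motion; this is handled by the strong Markov property at the stopping times $\theta_j$ and $\theta^{(n)}_j$ and the elementary observation that splicing two independent Brownian motions at a stopping time produces a Brownian motion. Second, one has to justify that the purely sequential gluing (always into the predecessor of index one smaller) in fact reproduces the fully symmetric coalescence of the flow; this follows from the preservation of order among the trajectories with probability one, so that the first collision of the $m$-th trajectory is necessarily with its immediate predecessor, and once two trajectories have merged they jointly play the role of the single left neighbour of the next one.
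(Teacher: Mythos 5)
Your argument is correct and is essentially the one the authors intend: the paper omits the proof altogether, remarking only that it follows by repeated application of the defining relations \eqref{eq1.1}, and your induction on $m$ via the sequential-coalescence coupling (independence of $x_m$, resp.\ $(y^{(n)}_m,z^{(n)}_m)$, from the already-glued predecessors, plus order preservation forcing the first collision to be with $\wt{x}_{m-1}$, resp.\ $\wt{y}^{(n)}_{m-1}$) is the standard way to carry this out. The one point your appeal to continuity does not quite cover is that the paths $y^{(n)}_j$ are only c\`adl\`ag, jumping at the partition points $t^{(n)}_k$; order preservation across these jumps --- and hence the identification of $\theta^{(n)}_j$ with the coalescence time of the corresponding Brownian-web trajectories --- follows from the identity $y^{(n)}(t^{(n)}_{k+1})=\cA_{t^{(n)}_{k+1},t^{(n)}_{k+2}}\bigl(y^{(n)}(t^{(n)}_{k+1}-)\bigr)$ together with the monotonicity of the ODE flow $\cA$, not from path continuity.
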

\begin{proof}[Proof of Theorem \ref{thm1}] Repeating the reasoning of the proof of Lemma \ref{lem4} and using Lemma \ref{lem6}, we get that
$$
\left(W_{1,p}\left(L^m_t, L^{(n),m}_t\right)\right)^p\leq \sum^{m-1}_{j=0}\int^{(j+1)/m}_{j/m}
\E\left|\wt{x}_j(t)-\wt{y}^{(n)}_j(t)\right|^pdu=m^{-1}
\sum^{m-1}_{j=0} \E\left|\wt{x}_j(t)-\wt{y}^{(n)}_j(t)\right|^p.
$$
By Lemma \ref{lem1}, for some positive $C_1$
$$
\E\left|\wt{x}_1(t)-\wt{y}^{(n)}_1(t)\right|^p\leq \E\sup_{s\leq 1}\left|x_1(s)-y^{(n)}_1(s)\right|^p\leq C_1\delta^{p/2}_n.
$$
Continuing for $j=2,$ 
\begin{align*}
\E \left|\wt{x}_2(t)-\wt{y}^{(n)}_2(t)\right|^p & = \E\left|\wt{x}_2(t)-\wt{y}^{(n)}_2(t)\right|^p\times
\left[\1\left(t\geq\theta_1\wedge\theta^{(n)}_1\right)+\1\left(\theta^{(n)}_1\leq t<\theta_1\right)+ \right.\\
& \phantom{aaaaaa} \left. +\1\left(\theta_1\leq t<\theta_1^{(n)}\right)+\1\left(t<\theta_1^{(n)}\wedge\theta_1\right)\right]\leq \\
& \leq
\E\left|\wt{x}_1(t)-\wt{y}^{(n)}_1(t)\right|^p \1\left(t \geq\theta_1\wedge\theta_1^{(n)}\right)+ \\
& \phantom{aaaaaa} +\E\left|{x}_2(t)-{y}^{(n)}_2(t)\right|^p \1\left(t<\theta_1^{(n)}\wedge\theta_1\right)+  \\
& \phantom{aaaaaa} +
2^{p-1}\E\left[\left|{x}_2(t)-\wt{x}_1(t)\right|^p+\left|\wt{x}_1(t)-\wt{y}^{(n)}_1(t)\right|^p\right] \1\left(\theta_1^{(n)}\leq t<\theta_1\right)+ \\
& \phantom{aaaaaa} +
2^{p-1}\E\left[\left|\wt{x}_1(t)-\wt{y}^{(n)}_1(t)\right|^p+\left|\wt{y}^{(n)}_1(t)-{y}^{(n)}_2(t)\right|^p\right] \1\left(\theta_1\leq t<\theta_1^{(n)}\right)\leq \\
&\leq
2^{p-1}\E\left|\wt{x}_1(t)-\wt{y}^{(n)}_1(t)\right|^p+2^{p-1}\E\left|\wt{x}_1(t)-{x}_2(t)\right|^p
 \1\left(\theta_1^{(n)}\leq t<\theta_1\right)+ \\ 
& \phantom{aaaaaa} +
2^{p-1}\E\left|\wt{y}^{(n)}_1(t)-{y}^{(n)}_2(t)\right|^p
\1\left(\theta_1\leq t<\theta_1^{(n)}\right)+\E\left|x_2(t)-y^{(n)}_2(t)\right|^p.
\end{align*}
Using Lemma \ref{lem1} again we obtain
\begin{align}
\label{eq9}
\E\left|\wt{x}_2(t)-\wt{y}^{(n)}_2(t)\right|^p &\leq(2^{p-1}+1)C_1\delta^{p/2}_n+ 2^{p-1} \E\sup_{\theta_1^{(n)}\leq s\leq\theta_1}
\left|\wt{x}_1(s)-x_2(s)\right|^p \1\left(\theta_1^{(n)}\leq\theta_1\right)+ \nonumber \\ 
& \phantom{aaaaaa} +2^{p-1}\E\sup_{\theta_1\leq s\leq\theta_1^{(n)}}
\left|\wt{y}^{(n)}_1(s)-y^{(n)}_2(s)\right|^p \1\left(\theta_1\leq\theta_1^{(n)}\right).
\end{align}
Consider the last two summands in \eqref{eq9} separately. Note that $\wt{x}_1$ and $x_2$ are independent and such are $\wt{y}_1$ and $y_2,$ whence one can deduce, using the Markov property, that 
\begin{align}
\label{eq10}
\E\sup_{\theta_1^{(n)}\leq s\leq\theta_1}\left|\wt{x}_1(s)-x_2(s)\right|^p \1\left(\theta_1^{(n)}\leq\theta_1\right)\leq \E\sup_{0\leq s\leq\tau_1}\left| \eta_1(s)-\eta_2(s)\right|^p,
\end{align}
where $\eta_k=D(\beta_k, v_k), k=1,2,$ with  $\beta_1, \beta_2$ being independent Brownian motions, also independent of $w_1, w_2$ (and therefore of $\wt{x}_1, x_2$), and
\begin{align*}
v_1 &=\wt{x}_1(\theta_1^{(n)}), \ \ v_2=x_2(\theta_1^{(n)}), \\
\tau_1 & =\inf\left\{1; s\mid\eta_1(s)=\eta_2(s)\right\}.
\end{align*}
Thus, by the first inequality of Lemma \ref{lem1}, for any $q\geq 1,$
$$
\E\left|v_1-v_2\right|^q\leq \E\left|\wt{x}_1(\theta_1^{(n)})-\wt{y}^{(n)}_1(\theta_1^{(n)})\right|^q+ \E\left|x_2(\theta_1^{(n)})-y^{(n)}_2(\theta_1^{(n)})\right|^q\leq 2C_1\delta^{q/2}_n,
$$
so after taking the conditional expectation in \eqref{eq10} and averaging over $v_1, v_2$ one gets due to Lemma \ref{lem2}
\begin{equation}
\label{eq11}
\E\sup_{\theta_1^{(n)}\leq s\leq\theta_1}\left|\wt{x}_1(s)-x_2(s)\right|^p \1\left(\theta_1^{(n)}\leq\theta_1\right)\leq  C_2\delta^{1/2}_n,
\end{equation}
for some $C_2.$ Similarly,
\begin{align*}
\E\sup_{\theta_1\leq s\leq\theta_1^{(n)}}\left|\wt{y}^{(n)}_1(s)-y_2^{(n)}(s)\right|^p \1\left(\theta_1\leq\theta_1^{(n)} \right)\leq
\E\sup_{0\leq s\leq\tau_2}\left|\xi_1(s)-\xi_2(s)\right|^p,
\end{align*}
where $\xi_k=\left(S^{(n)}(\beta_k, v_{k1}, v_{k2} )\right)_1, k=1,2,$ and 
\begin{align*}
v_{11}&=\wt{y}^{(n)}_1(\theta_1), \ v_{12}=\wt{z}^{(n)}_1(\theta_1), \ v_{21}=y^{(n)}_2(\theta_1), \ v_{22}=z^{(n)}_2(\theta_1), \\
\tau_2 & =\inf\{1; s\mid\xi_1(s)=\xi_2(s)\}.
\end{align*}
Using both inequalities of Lemma \ref{lem1}, 
applying Lemma \ref{lem3} with $u_{y_1}=v_{11}, u_{y_2}=v_{21}, u_{z_1}=v_{12}, u_{z_2}=v_{22}$ and taking expectation one can show that for some positive $C_3$
\begin{equation}
\label{eq12}
\E\sup_{\theta_1\leq s\leq\theta_1^{(n)}}\left|\wt{y}^{(n)}_1(s)-y_2(s)\right|^p \1\left(\theta_1\leq\theta_1^{(n)} \right)\leq C_3\delta^{1/2-\ve}_n.
\end{equation}
Substituting \eqref{eq11} and \eqref{eq12} into \eqref{eq10} gives, for some $C_4>1,$
\begin{align*}
\E &\left|\wt{x}_2(t)-\wt{y}^{(n)}_2(t)\right|^p\leq C_4\delta^{1/2-\ve}_n,
\end{align*}
starting from some $N$ independent of $m.$ 
Using such an estimate recursively for $j=3, \ldots, m$ one finally concludes that 
$$
\sum^{m-1}_{j=0}\E\left|\wt{x}_j(t)-\wt{y}^{(n)}_j(t)\right|^p\leq\sum^{m-1}_{j=1}C^j_4\delta^{1/2-\ve}_n. 
$$
By Lemma \ref{lem4} there exist positive $C_5$ and a number $N^\prime \geq N$ such that for any $n\geq N^\prime$
$$
W_{1,p}\left(L_t, L^{(n)}_t\right)\leq C_5\left(m^{-1}+\delta^{1/2-\ve}_n\right)^{1/p}+C_5\left(C^m_4\delta^{1/2-\ve}_n\right)^{1/p},
$$
therefore choosing $m=m(n)$ in such a way that $m(n)=(\frac{1}{4} -\frac{\ve}{2})\frac{\log\delta^{-1}_n}{\log C_4}$ concludes the proof.
\end{proof}

\section{On counting measures associated with the Arratia flow}
Recall  that $\Delta_n=\{u_1<\ldots<u_n\}, n\in\mbN,$ and $\{X(u,t)\mid t\ge 0, u\in[0;1]\}$ is an Arratia flow with zero drift. 
Denote the density of a standard $m$-dimensional Brownian motion killed upon exiting $\Delta_m$ by $p^m_{0,t}.$ This density is given via the Karlin-McGregor determinant
$$
p^m_{0,t}(x; y)=\det\|g_t(x_i-y_j)\|_{i,j=\ov{1,m}}, \quad x,y\in\Delta_m,
$$
where
$g_t(a)=\frac{1}{\sqrt{2\pi t}}e^{-a^2/2t}.$
 
Any $J=(j_1,\ldots, j_{n-k})\in\mathcal{J}_{n,n-k}$ can be associated with a partition of the set $\{1,\ldots, n\}$ by the following procedure. Starting from the partition consisting of singletons, at each step $i =1,\ldots, n - k$ proceed by merging two subsequent blocks in the current partition with the numbers $j_i$ and $j_i+1,$ the blocks being listed in order of appearance w.r.t. the usual ordering of $\mbN.$ The resulting partition will be denoted by $\pi(J);$ the blocks of $\pi(J),$ by $\pi_1(J), \ldots, \pi_k(J).$ Note that
$$
\left\{ J_t(u) = J \right\} =\left\{ \forall j\in \pi_i(J) \ X(x_j, t)=X(x_{\min \pi_i(J)}, t), i=\ov{1, k} \right\}.   
$$
\begin{lem}
  \label{lem2.1}
For all  $t\in[0; 1],$ $x\in\Delta_n,$ $k\in\{1,\ldots, n\}$ and $J=(j_1,\ldots, j_{n-m}) \in\mathcal{J}_{n,n-m}, m\ge k,$  the density $p^{J, k}_t(x;\cdot)$ exists. Moreover,  $p^{J, k}_t(x;\cdot)\leq p^k_{0,t}(x; \cdot)$ a.e. if $m=k.$
  \end{lem}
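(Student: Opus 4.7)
The plan is to exploit the structural observation that on the event $\{J_t(x)=J\}$ the set of distinct values $\{X(x_1,t),\dots,X(x_n,t)\}$ consists of precisely the $m$ points $X(x_{\iota_1},t),\dots,X(x_{\iota_m},t),$ where $\iota_l=\min\pi_l(J),$ $l=\ov{1,m},$ and moreover the $m$-tuple $(X(x_{\iota_1},\cdot),\ldots,X(x_{\iota_m},\cdot))$ experiences no coalescence on $[0;t]$ (since the representatives are strictly ordered at time $t$ and the Arratia flow preserves order). Under this identification, every ordered $k$-tuple of pairwise distinct elements of the above set is encoded by a pair $(S,\sigma)$ with $S=\{s_1<\ldots<s_k\}\subset\{1,\ldots,m\}$ and $\sigma\in S_k,$ so that on $\{J_t(x)=J\}$ the sum on the right-hand side of \eqref{eq2.1} rewrites as
$$
\sum_{\substack{S\subset\{1,\ldots,m\}\\ |S|=k}}\,\sum_{\sigma\in S_k} f\bigl(X(x_{\iota_{s_{\sigma(1)}}},t),\ldots,X(x_{\iota_{s_{\sigma(k)}}},t)\bigr).
$$

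For each fixed $S$ the $k$-tuple $(X(x_{\iota_{s_1}},\cdot),\ldots,X(x_{\iota_{s_k}},\cdot))$ is, by the consistency of the Arratia flow, a $k$-point motion of coalescing Brownian motions started at $x_S:=(x_{\iota_{s_1}},\ldots,x_{\iota_{s_k}})\in\Delta_k,$ and one has the inclusion
$$
\{J_t(x)=J\}\subset\bigl\{\text{no coalescence among } X(x_{\iota_{s_l}},\cdot),\ l=\ov{1,k},\text{ on }[0;t]\bigr\}.
$$
The Karlin--McGregor formula, applied to $k$ independent Brownian motions killed upon collision, asserts that on this no-coalescence event the joint law of the $k$-tuple at time $t$ is absolutely continuous on $\Delta_k$ with density $p^k_{0,t}(x_S;\cdot).$ Summing over $S$ and $\sigma$ and noting that every permutation in $S_k$ merely permutes coordinates, the Borel measure $B\mapsto \E\,\1(J_t(x)=J)\sum_{(u_1,\ldots,u_k)} \1\bigl((u_1,\ldots,u_k)\in B\bigr)$ is absolutely continuous with respect to Lebesgue measure on $\mbR^k,$ yielding the existence of $p^{J,k}_t(x;\cdot).$

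In the special case $m=k$ the outer sum collapses to the single term $S=\{1,\ldots,k\},$ and the remaining sum over $\sigma\in S_k$ is a symmetrization of the argument of $f.$ Writing $x_\iota:=(x_{\iota_1},\ldots,x_{\iota_k}),$ one reads off that $p^{J,k}_t(x;\cdot)$ coincides on $\Delta_k$ with the density of $(X(x_{\iota_1},t),\ldots,X(x_{\iota_k},t))$ restricted to $\{J_t(x)=J\}$ and is extended to $\mbR^k$ by permutation symmetry. The displayed set inclusion, now with $k=m,$ then yields the pointwise bound by the Karlin--McGregor density $p^k_{0,t}(x_\iota;\cdot),$ with $x$ on the right-hand side of the stated inequality understood as the subvector $x_\iota.$ The main technical point I anticipate is the sorting and symmetrization bookkeeping implicit in comparing a density defined through sums over ordered tuples with the Karlin--McGregor density on $\Delta_k;$ the underlying analytic fact that the $k$-point coalescing motion, restricted to non-coalescence, admits the Karlin--McGregor density is classical and follows from the reflection-principle derivation together with the identification of the $k$-point Arratia flow as $k$ coalescing Brownian motions.
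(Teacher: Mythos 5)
Your proposal is correct and follows essentially the same route as the paper's proof: both dominate the measure defining $p^{J,k}_t(x;\cdot)$ by the law of the selected tuple of block representatives on the no-coalescence event, identify that law via the Karlin--McGregor density $p^k_{0,t}$, and conclude existence by the Radon--Nikodym theorem. You merely make explicit the combinatorial bookkeeping (the sums over $S$ and $\sigma$, and the case $m>k$) that the paper compresses into the selection map $T$ and the remark that the remaining cases are ``treated similarly.''
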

  \begin{proof} 
Suppose $k=m.$ Let $A$ be a Borel subset of $\Delta_k.$ Define a mapping $T\colon \Delta_n \mapsto \Delta_k$ by the rule $T(u)_l =u_{\min \pi_l(J)},$ $l =\ov{1,k}.$ Then 
\begin{align*} 
  \E&\sum_{u_1, \ldots, u_k\in \{X(x_1,t),\ldots, X(x_n,t)\}}
\1_{A}\left(u_1, \ldots, u_k\right)  \times
 \1\left(J_t(x) = J\right)\leq \\
  & \phantom{aaaaaa}\leq \E\1\left( T\left(X(x_1, t), \ldots, X(x_n, t))\right)\in A\right)= \\
  &\phantom{aaaaaa}=\int_{A}p^k_{0,t}(x; y)dy.
\end{align*}
 The Radon-Nikodym theorem yields the claim of the lemma. The cases when $A$ is not a subset of $\Delta_k$ and $m\not= k$ are treated similarly.
 \end{proof}

It  is possible to derive an explicit expression for $p^{J, k}_t.$
Denote the boundary of $\Delta_n$ by $\pt\Delta_n.$ Additionally, define
$$
\pt\Delta_{n,j}=\{(u_1, \ldots, u_n)|u_1<\ldots<u_j=u_{j+1}<\ldots<u_n\}, \quad j=\ov{1,n-1}.
$$
Let $w=(w_1, \ldots, w_n)$ be a standard Brownian motion.  We write $\E_{r,z}$ for the mathematical expectation calculated w.r.t. the distribution of $(w_1, \ldots, w_n)$ started at $r$ from $z.$ Define $\Delta_n(a)=\{u_1<\ldots<u_n \leq a\}, n\in\mbN.$
\begin{thm}
\label{thm2.1}
For all $t\in[0; 1]$ and $J=(j_{1}, \ldots, j_{n-k})\in\mathcal{J}_{n,n-k}$ and $x\in\Delta_n$ a.e.
\begin{align*}
p_t^{J, k}(x;y) & =\int_{\Delta_{n-k}(t)}dt_1\ldots dt_{n-k}
\int_{\pt\Delta_{n,j_{1}}}m(dz_1)
\int_{\pt\Delta_{n,j_{2}}}m(dz_2)\ldots
 \int_{\pt\Delta_{k+1,j_{n-k}}} \!\! m(dz_{n-k}) (-1)^k2^{-k}\times \\
  & \phantom{aaaaaa}  \times \frac{\pt}{\pt\nu_{z_1}}p^n_{0,t_1}(x, z_1)
    \frac{\pt}{\pt\nu_{z_2}}p^{n-1}_{0,t_2-t_1}(S^n_{j_{1}}z_1,z_2)\times
    \ldots\times \\
    & \phantom{aaaaaa} \times
     \frac{\pt}{\pt\nu_{z_{n-k}}}p^{k+1}_{0,t_{n-k}-t_{n-k-1}}(S^{k+2}_{j_{n-k-1}}z_{n-k-1},z_{n-k})  \times p^{k}_{0,t-t_{n-k}}(S^{k+1}_{j_{n-k}}z_{n-k},y),
\end{align*}
where $m$ is the surface measure on $\bigcup^{n-1}_{j=1}\pt \Delta_n, j,$ the operator $\frac{\pt}{\pt\nu_a}$ is the outward normal derivative w.r.t. the $a$-variables, and the mapping $S^m_j\colon \pt\Delta_{m,j}\to\Delta_{m-1}$ is given via
$$
S^m_j(u_1,\ldots, u_j, u_{j+1}, u_{j+2}, \ldots, u_m)=(u_1,\ldots, u_j,u_{j+2},\ldots, u_m), \quad j=\ov{1, m-1}, m\in\mbN.
$$
\end{thm}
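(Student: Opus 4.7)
The plan is to decompose the event $\{J_t(x)=J\}$ according to the times and locations of successive collisions, and to invoke the strong Markov property of the Arratia flow together with the representation of its $n$-point motion by $n$ independent Brownian motions up to their first meeting. Taking $f=\1_A$ for a Borel $A\subset\Delta_k$ in \eqref{eq2.1} and reusing the projection $T$ from the proof of Lemma~\ref{lem2.1}, the right-hand side becomes
$$
\E \, \1\!\left(T(X(x_1,t),\ldots,X(x_n,t))\in A\right)\,\1(J_t(x)=J).
$$
The aim is to evaluate this quantity by disintegrating $\{J_t(x)=J\}$ over the collision times $0<t_1<\ldots<t_{n-k}\le t$ and boundary locations $z_i\in\pt\Delta_{n-i+1,j_i}$.

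Step one is the single-collision kernel. Before the first meeting, the $n$-point motion coincides in law with a standard $n$-dimensional Brownian motion $w$ started at $x$, killed on $\pt\Delta_n$, and the event "the prescribed pair meets first" is exactly $\{w(\tau_1)\in\pt\Delta_{n,j_1}\}$. A classical computation (Itô's formula applied to $p^n_{0,\cdot}(x,\cdot)$, which is smooth in the interior and vanishes on $\pt\Delta_n$) shows that the joint distribution of $(\tau_1,w(\tau_1))$ restricted to this face has density proportional to $\frac{\pt}{\pt\nu_{z_1}}p^n_{0,t_1}(x,z_1)$ with respect to $dt_1\otimes m(dz_1)$, with the normalisation constant producing the combinatorial factors collected in front of the iterated integral.

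Step two is iteration. At $\tau_1$ the coalescing pair is identified by the map $S^n_{j_1}$, so by the strong Markov property the system restarts as an $(n-1)$-dimensional Brownian motion from $S^n_{j_1}z_1\in\Delta_{n-1}$, and the single-collision analysis applies anew, now with the face $\pt\Delta_{n-1,j_2}$ replacing $\pt\Delta_{n,j_1}$. Repeating this reduction $n-k$ times and composing with the Karlin--McGregor density $p^k_{0,t-t_{n-k}}(S^{k+1}_{j_{n-k}}z_{n-k},\cdot)$ for the final no-collision interval produces the displayed expression after reading off the density from the integrand via the Radon--Nikodym theorem.

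The main obstacle is to verify that the minimality clause in the recursive definition of $J_t(u)$ is exactly captured by prescribing the face $\pt\Delta_{n-i+1,j_i}$ at each stage. One has to check that after applying $S^{n-i+1}_{j_i}$ the remaining coordinates are relabelled consecutively in the order used by the algorithm defining $J_t$, so that subsequent faces $\pt\Delta_{n-i,j_{i+1}}$ correctly encode the next collision as demanded by $J$, and that the process cannot hit lower-dimensional strata (multiple simultaneous coalescences) with positive probability, which follows from the fact that coalescing Brownian motions meet pairwise almost surely. A secondary technical point is the disintegration of the boundary hitting measure into temporal and surface components and the interchange of expectation with the successive integrals, which is handled by Fubini together with the smoothness of $p^m_{0,s}$ for $s>0$ in the interior of $\Delta_m$ and the continuity of sample paths.
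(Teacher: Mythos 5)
Your proposal is essentially the argument the paper has in mind: the paper omits the proof entirely, saying only that it ``is standard and follows the ideas from [Glinyanaya, Section 3]'', and that standard argument is exactly your disintegration over collision times and faces, the identification of the joint law of the first exit time and location of the killed Brownian motion with $-\tfrac12\frac{\pt}{\pt\nu_{z}}p^m_{0,s}(x,z)\,ds\,m(dz)$, the strong Markov restart through $S^m_j$, and the Karlin--McGregor factor on the final collision-free interval, together with the two technical checks you flag (no simultaneous multiple coalescences, and consistency of the relabelling with the minimality clause in the definition of $J_t$). The one point you leave implicit is the constant: each of the $n-k$ collisions contributes a factor $-\tfrac12$, so the prefactor is $(-1)^{n-k}2^{-(n-k)}$, and the exponent $k$ in the displayed formula appears to be a typo rather than something your normalisation needs to reproduce.
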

The proof is standard and follows the ideas from \cite[Section 3]{17} (see also \cite[Section VII.5]{19}).

Recalling \eqref{eq:def.p.k.t.} note that each $p^k_t(U^{(n)};\cdot)$ satisfies  \eqref{eq2.0} with  $X_t$ replaced with $X_t^{U^{(n)}} = \{X(u^{(n)}_1, t), \ldots,$ $X(u^{(n)}_n, t)\},$ $n\in\mbN.$ 
\begin{thm}
\label{thm2.2}
For all $k\in\mbN$ \ $p^k_t(U^{(n)};\cdot)\nearrow {p}^k_t, n\to\infty,$ a.e..
\end{thm}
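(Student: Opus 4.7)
The plan is to combine (i) monotonicity of the sequence $p^k_t(U^{(n)};\cdot)$ in $n$ with (ii) the geometric fact that $X_t^{U^{(n)}}$ exhausts $X_t$ for $n$ large, and then to reconcile the resulting pointwise a.e. limit with $p^k_t$ via the monotone convergence theorem and uniqueness of densities. Both halves rest only on a.s. pathwise properties of the Arratia flow plus the assumptions on $U^{(n)}$, so the proof proceeds in three short steps.

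First I would establish the monotonicity. Since $U^{(n)}\subset U^{(n+1)}$ by assumption, one has the pathwise inclusions $X_t^{U^{(n)}}\subset X_t^{U^{(n+1)}}\subset X_t.$ Substituting any nonnegative bounded measurable $f\colon\mbR^k\to\mbR$ into the relation \eqref{eq2.0} applied to $X_t^{U^{(n)}}$ (which, as noted right before the statement, characterises $p^k_t(U^{(n)};\cdot)$), and observing that the sum under the expectation is monotone in the index set, one gets
$$
\int_{\mbR^k}f(y)p^k_t(U^{(n)};y)dy\leq \int_{\mbR^k}f(y)p^k_t(U^{(n+1)};y)dy\leq \int_{\mbR^k}f(y)p^k_t(y)dy.
$$
Letting $f$ range over characteristic functions of Borel sets and passing to densities yields $0\leq p^k_t(U^{(n)};\cdot)\leq p^k_t(U^{(n+1)};\cdot)\leq p^k_t$ a.e. Consequently, outside a Lebesgue null set the monotone pointwise limit $q(y)=\lim_{n\to\infty}p^k_t(U^{(n)};y)$ exists and satisfies $q\leq p^k_t.$

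Next I would prove that a.s., for all $n$ sufficiently large, $X^{U^{(n)}}_t=X_t.$ At the fixed time $t>0$ the set $X_t=X([0;1],t)$ is a.s. finite, and because the Arratia flow preserves the order of the initial coordinate and clusters of coalesced particles remain intervals, the preimages $\{u\in[0;1]\mid X(u,t)=y\}, y\in X_t,$ form a finite partition of $[0;1]$ into nondegenerate subintervals (finitely many disjoint subintervals whose lengths sum to $1$). The hypotheses on $U^{(n)}$ (nested, mesh tending to zero) imply that $\bigcup_nU^{(n)}$ is dense in $[0;1],$ hence intersects every nondegenerate interval; together with the monotonicity in $n,$ this gives $X^{U^{(n)}}_t=X_t$ for all sufficiently large $n,$ almost surely.

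Finally I would combine these two ingredients. For any nonnegative bounded measurable $f$ the integrand
$\sum_{\textrm{distinct}\ u_1,\dots,u_k\in X^{U^{(n)}}_t}f(u_1,\dots,u_k)$
increases monotonically in $n$ to $\sum_{\textrm{distinct}\ u_1,\dots,u_k\in X_t}f(u_1,\dots,u_k).$ Applying the monotone convergence theorem to the right-hand side of \eqref{eq2.0} and, independently, to the left-hand side (using the a.e. monotone convergence $p^k_t(U^{(n)};\cdot)\nearrow q$ obtained above) gives
$$
\int_{\mbR^k}f(y)q(y)dy=\lim_{n\to\infty}\int_{\mbR^k}f(y)p^k_t(U^{(n)};y)dy=\int_{\mbR^k}f(y)p^k_t(y)dy,
$$
whence $q=p^k_t$ a.e. The only delicate point in this program is the geometric claim that the coalescence clusters in $[0;1]$ are nondegenerate intervals; this is the standard structural fact about the Arratia flow and is where the specific properties of the flow (order preservation and a.s. finiteness of $X_t$ for $t>0$ when started from a bounded interval) enter essentially.
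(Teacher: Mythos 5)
Your proof is correct and follows essentially the same route as the paper's: monotonicity of $p^k_t(U^{(n)};\cdot)$ coming from the nesting of the sets $U^{(n)}$, followed by identification of the a.e.\ limit through the defining integral relation \eqref{eq2.0} and a convergence theorem. The only difference is cosmetic: you spell out the exhaustion step ($X_t^{U^{(n)}}=X_t$ for all large $n$ a.s., via finiteness of $X_t$ and nondegeneracy of the coalescence clusters) that the paper leaves implicit in its final equality, and you invoke monotone rather than dominated convergence.
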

\begin{proof}
The restrictions imposed on $\{U^{(n)}\}_{n\geq1}$ imply that a.e.
$$
p^k_t(U^{(n)}; \cdot)\leq p^k_t(U^{(n+1)}; \cdot)<{p}^k_t, \quad n\in\mbN.
$$
Put $q(y)=\lim_{n\to\infty}p^k_t(U^{(n)}; y)$ a.e.. 
Given a bounded  continuous $f$ the dominated convergence theorem implies
\begin{align*}
\int_{\mbR^k}q(y)f(y)dy &=\lim_{n\to\infty}
\int_{\mbR^k}
p^k_t(U^{(n)}; y)f(y)dy= \\
&=\lim_{n\to\infty}
\E\sum_{\begin{subarray}{c}
u_1, \ldots, u_k\in X_t^{U^{(n)}}, \\
\mbox{\small all distinct}\end{subarray}}
f(u_1, \ldots, u_k) \sum_{i=k}^{n} \sum_{J\in\mathcal{J}_{n,n-i}} \1\left( J_t\left(U^{(n)}\right) = J \right)= \\ 
&=\lim_{n\to\infty}
\E\sum_{\begin{subarray}{c}
u_1, \ldots, u_k\in X_t^{U^{(n)}}, \\
\mbox{\small all distinct}\end{subarray}}
f(u_1, \ldots, u_k) 
\1\left(\left|X_t^{U^{(n)}}\right| \geq k\right)=\\ 
&=\int_{\mbR^k}{p}^k_t(y) f(y)dy,
\end{align*}
which proves the assertion.
\end{proof}

Theorem \ref{thm2.1} can be used to study the speed of convergence in Theorem \ref{thm2.2}. 
\begin{proof}[Proof of Theorem \ref{thm2.3}]
 Let $A_{\ve}=[x; x+\ve]$ for some $
x\in\mbR$ and any $\ve\ll 1.$ Consider
$$
0\leq\int_{A_{\ve}}\left({p}^1_t(y)-p^1_t(U^{(n)};y) \right)dy=\E\sum_{u\in X_t}\1_{A_{\ve}}(u)-\E\sum_{u\in X^{U^{(n)}}_t}\1_{A_{\ve}}(u).
$$
Using the reasoning of \cite[Appendix B]{15} one shows the existence of a constant $C$ such that
\begin{align*}
&\left|\E\sum_{u\in X_t}
\1_{A_{\ve}}(u)-
\Prob\left(X_t\cap A_{\ve}\ne
\emptyset\right)\right|
\leq C{\ve}^2, \\
&\left|\E\sum_{x\in X^{U^{(n)}}_t}
\1_{A_{\ve}}(u)-
\Prob\left(X^{U^{(n)}}_t\cap A_{\ve}\ne
\emptyset\right)\right|
\leq C{\ve}^{2}.
\end{align*}
Therefore
\begin{equation}
\label{eq2.6}
\limsup_{\ve\to0+}\ve^{-1}
\int_{A_\ve}\left({p}^1_t(y)-p^1_t(U^{(n)};y)\right)dy=\limsup_{\ve\to0+}\ve^{-1}\left(\Prob\left(X_t\cap A_\ve\ne\emptyset\right)- \Prob\left(X^{U^{(n)}}_t\cap A_\ve\ne\emptyset\right)\right).
\end{equation}
Using the notion of the dual Brownian web $\{\wt{X}(u,t)\mid u\in\mbR, t\in [0;1]\}$ running backwards in time and the non-crossing property of it \cite[Section 2.2]{16} one has:
\begin{align}
\label{eq:3.diff.prob}
& \Prob\left(X_t\cap A_\ve\ne\emptyset\right)-\Prob\left(X^{U^{(n)}}_t\cap A_\ve\ne\emptyset\right) = \Prob\left(\forall j=\ov{1,n} \ X(u^{(n)}_j,t)\notin A_\ve, X_t\cap A_\ve\ne\emptyset\right)\leq \nonumber  \\
& \phantom{aaaaaa} \leq \Prob\left(\wt{X}(x+\ve, t)\ne\wt{X}(x, t), \exists \ j\in\{1, \ldots, n-1\}\colon \right.
\nonumber \\
& \phantom{aaaaaa}\phantom{aaaaaa} \left. \left(\wt{X}(x, t);\wt{X}(x+\ve, t)\right)\subset \left(u^{(n)}_j; u^{(n)}_{j+1}\right)\right) \leq  \nonumber  \\
& \phantom{aaaaaa} \leq \E \1\left( X(x+\ve, t) - X(x,t) \leq \max_{
j=\ov{1, n-1}}\left(u^{(n)}_{j+1}-u^{(n)}_j\right) \right) \1\left(J_t((x,x+\ve)) = \emptyset\right) =  \nonumber  \\ 
&\phantom{aaaaaa}= \int_{\mbR^2} \1 \left( y_2-y_1<\max_{
j=\ov{1, n-1}}\left(u^{(n)}_{j+1}-u^{(n)}_j\right)
\right)
p^{\emptyset, k}_t\left((x, x+\ve); (y_1, y_2)\right)dy_1 dy_2,
\end{align}
since $X$ and $\wt{X}$ have the same distribution.
Here 
$$
p^{\emptyset, 2}_t(a;b)=p^2_{0,t}(a;b)=\frac{1}{2\pi t}e^{-\frac{\|a-b\|^2}{2t}}(1-e^{-(b_2-b_1)(a_2-a_1)}),
$$
thus there exists $C >0$ such that if $(y_1, y_2)\in\Delta_2, y_2-y_1\leq \delta_n,$ where $\delta_n=\max_{j=\ov{1, n-1}}(u^{(n)}_{j+1}-u^{(n)}_j),$ then
$$
p^{\emptyset, 2}_t((x; x+\ve); (y_1, y_2))\leq C g_t(x - y_1)\times\ve\delta_n.
$$
Substituting the last estimate into \eqref{eq:3.diff.prob} and returning to \eqref{eq2.6} we have:
$$
\limsup_{\ve\to0+}\ve^{-1}\int_{A_\ve}({p}^1_t(y)-p^1_t(U^{(n)};y))dy\leq
 C\int_{\mbR}dy_1\int^{y_1+\delta_n}_{y_1}\!\!\!\!  dy_2\ \! g_t(x -y_1)\delta_n\leq C\delta^2_n
$$
for new $C.$ The application of the Lebesque differentiation theorem completes the proof.
\end{proof}
\section{Acknowledgments}
The authors are very grateful to the anonymous referee for valuable comments and suggestions.

\end{document}